\documentclass[12pt]{article}
\usepackage[margin=2cm]{geometry}
\usepackage{amsfonts}
\usepackage{amssymb}
\usepackage{amsthm}
\usepackage{amsmath}
\usepackage{latexsym}
\usepackage{enumitem}
\usepackage{color}
\usepackage{mathrsfs}
\begin{document}
\allowdisplaybreaks[4]
\newtheorem{lemma}{Lemma}
\newtheorem{pron}{Proposition}
\newtheorem{thm}{Theorem}
\newtheorem{Corol}{Corollary}
\newtheorem{exam}{Example}
\newtheorem{defin}{Definition}
\newtheorem{remark}{Remark}
\newtheorem{property}{Property}
\newcommand{\la}{\frac{1}{\lambda}}
\newcommand{\sectemul}{\arabic{section}}
\renewcommand{\theequation}{\sectemul.\arabic{equation}}
\renewcommand{\thepron}{\sectemul.\arabic{pron}}
\renewcommand{\thelemma}{\sectemul.\arabic{lemma}}
\renewcommand{\thethm}{\sectemul.\arabic{thm}}
\renewcommand{\theCorol}{\sectemul.\arabic{Corol}}
\renewcommand{\theexam}{\sectemul.\arabic{exam}}
\renewcommand{\thedefin}{\sectemul.\arabic{defin}}
\renewcommand{\theremark}{\sectemul.\arabic{remark}}
\renewcommand{\theproperty}{\sectemul.\arabic{property}}
\def\REF#1{\par\hangindent\parindent\indent\llap{#1\enspace}\ignorespaces}

\title{\Large 
{Asymptotics of randomly weighted sums without moment conditions of random weights\thanks{The research was supported by the National Natural Science Foundation of China (No.12371471), and the Natural Science Foundation of Jiangsu Province of China (No.BK20241905).}}}
\author{\small Qingwu Gao$^{1}$, Dimitrios G. Konstantinides$^{2}$, Charalampos  D. Passalidis$^{2}$, Yuebao Wang$^{3}$\thanks{Corresponding author. Telephone: +86 512 67422726. Fax: +86 512 65112637. E-mail:  ybwang@suda.edu.cn}, and Hui Xu$^{4}$\\
{\footnotesize\it 1. School of Mathematics, Nanjing Audit University, Nanjing, China}\\
{\footnotesize\it 2. Department of Statistics and Actuarial-Financial Mathematics, University of the Aegean, Samos, Greece}\\
{\footnotesize\it 3. School of Mathematics, Soochow University, Suzhou, China}\\
{\footnotesize \it 4. School of Statistics and Data Science, Shanghai University of Finance and Economics, Shanghai, China}\\}
\date{}

\maketitle {\noindent\small {\bf Abstract.}} \small In the paper, we investigate the asymptotics of randomly weighted sums with upper tail asymptotically independent and quasi-upper tail asymptotically independent primary random variables without requiring moment assumptions on random weights. For the case of primary random variables with regularly varying tails, we obtain more explicit results via an extension of Breiman's theorem. Then an application of the obtained results is established to asymptotically estimate for the finite-time and infinite-time ruin probabilities in a discrete-time risk model.
\\

\noindent{\small{\bf Keywords:} Asymptotics of randomly weighted sums; Upper tail asymptotic independence; Regularly varying tail distribution; Extended Breiman's theorem; Discrete-time risk model; Finite-time and infinite-time ruin probabilities;}
\\

\noindent{{\bf MSC(2020) subject classifications:}\quad 60G70, 62E20, 62P05}
\\

\section{\normalsize\bf Introduction}
\setcounter{equation}{0}\setcounter{thm}{0}\setcounter{lemma}{0}\setcounter{remark}{0}\setcounter{pron}{0}\setcounter{Corol}{0}

\qquad In this paper, we investigate the tail asymptotics of randomly weighted sums, 
in which the primary random variables (r.v.s) satisfy a fairly broad dependence structure. As an application in risk theory, we establish the asymptotic estimate of the finite-time and infinite-time ruin probabilities in a discrete-time risk model. In this field, previous results usually require certain moment conditions on the random weights. The key goal of this paper is to eliminate this restriction. To this end, we explore new analytical tools, the first of which is to establish a uniform asymptotics of weighted sums, with particular attention to extending the range of the uniform asymptotics. In addition, most of existing results are based on the tail asymptotic dependence structure among the primary r.v.s, see Subsection 1.1. The second goal of this paper is to expand the scope of the dependence structure.

The whole study of this paper is closely associated with dependence structures and distribution classes of r.v.s. Therefore, we start with some necessary concepts and properties.

For the sake of brevity, in the following, let $n$ be a positive integer without special statement. If $n=\infty$, then $1\le i\le n$ means $1\le i<\infty$ or $i\ge1$.

\subsection{\normalsize\bf Dependence structures}

\qquad We say that r.v.s $\xi_{i},\ 1\le i\le n,$ are upper tail asymptotically independent (UTAI), if $P(\xi_{i}>x)>0$ for all $x\in(-\infty,\infty)$ and all $1\le i\le n$, and
\begin{equation}\label{equation101}
\lim_{\min\{x_{i},x_{j}\}\rightarrow\infty}P(\xi_{i}>x_{i}|\xi_{j}>x_{j})=0\qquad\text{for all}\quad 1\leq i\neq j\le n;
\end{equation}
see, e.g., Sibuya \cite{S1960}. If condition (\ref{equation101}) is replaced by
\begin{equation*}\label{equation102}
\lim_{\min\{x_i,x_j\}\to\infty}P(|\xi_{i}|>x_{i}|\xi_{j}>x_{j})=0\ \ \ \ \ \ \text{for all}\ \ \ \ 1\leq i\neq j\le n,
\end{equation*}
then $\xi_{i},\ 1\le i\le n,$ are said to be tail asymptotically independent (TAI); see, e.g., Maulik and Resnick \cite{MR2004}, and  Geluk and Tang \cite{GT2009}. If condition (\ref{equation101}) is changed to
\begin{equation*}\label{equation103}
\lim_{\min\{x_{i},x_{j}\}\rightarrow\infty}P(\xi_{i}>x_{i},\ \xi_{j}>x_{j})\big(P(\xi_{i}>x_{i})+P(\xi_{j}>x_{j})\big)^{-1}=0\qquad\text{for all}\quad 1\leq i\neq j\le n,
\end{equation*}
then $\xi_{i},\ 1\le i\le n,$ are said to be quasi upper tail asymptotically independent (QUTAI); see Cheng \cite{C2014}.

Obviously, the UTAI structure includes the TAI one, and is included by the QUTAI one. And the inclusion relationships are strict, see Example \ref{exam401} below, and Proposition 4.3 of Cheng \cite{C2014}. As was noted in the literature, the class of UTAI r.v.s is fairly wide (see, e.g., Proposition \ref{pron102} below), but it has rather limited structural properties. Some basic properties of UTAI structure are summarized below.

For a r.v $\xi$, we write $\xi^+=\xi\textbf{1}_{\{\xi\ge0\}}$ and $\xi^-=-\xi\textbf{1}_{\{\xi<0\}}$.

\begin{pron}\label{pron101}
(1) R.v.s $\xi_i,\ 1\le i\le n,$ are TAI $\Rightarrow$ they are UTAI; but not otherwise.

(2) If $\xi_i,\ 1\le i\le n,$ are nonnegative r.v.s, then they are TAI $\Leftrightarrow$ they are UTAI.

(3) R.v.s $\xi_i,\ 1\le i\le n,$ are UTAI $\Leftrightarrow$ r.v.s $\xi^+_i,\ 1\le i\le n,$ are UTAI.

\end{pron}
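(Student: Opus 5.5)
The three parts follow directly from the definitions, using the elementary facts that $\{\xi>x\}\subset\{|\xi|>x\}$ for every $x$, and that $\{\xi^+>x\}=\{\xi>x\}$ whenever $x\ge0$. Since all limits are taken as $\min\{x_i,x_j\}\to\infty$, we may restrict to $x_i,x_j>0$ throughout.

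For part (1), assume the $\xi_i$ are TAI and fix $i\ne j$. Because $\{\xi_i>x_i\}\subset\{|\xi_i|>x_i\}$, we have
\[
0\le P(\xi_i>x_i\,|\,\xi_j>x_j)\le P(|\xi_i|>x_i\,|\,\xi_j>x_j)\to0 .
\]
The positivity requirement $P(\xi_i>x)>0$ for all $x$ is part of both definitions, so TAI implies UTAI.

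For the failure of the converse, we need a UTAI pair that is not TAI. Take $\xi_1$ with unbounded support on both sides, for instance standard Cauchy, and set $\xi_2=-\xi_1$. Then $P(\xi_1>x_1,\xi_2>x_2)=0$ once $x_1,x_2>0$, so the pair is UTAI. On the other hand, $P(|\xi_1|>x\,|\,\xi_2>x)=1$ for every $x>0$, so the TAI limit along $x_1=x_2=x\to\infty$ equals $1$ rather than $0$. Alternatively, one could cite Example \ref{exam401}, which the paper refers to for this purpose.

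For part (2), one direction is part (1). For the other, note that nonnegative variables satisfy $|\xi_i|=\xi_i$, so the TAI and UTAI conditions are literally the same. One point needs care: a nonnegative variable has $P(\xi_i>x)=1>0$ for all $x<0$, so the positivity requirement reduces to $P(\xi_i>x)>0$ for all large $x$, and this is identical under both definitions.

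For part (3), for $x_i,x_j>0$ we have $\{\xi_i^+>x_i\}=\{\xi_i>x_i\}$ and $\{\xi_j^+>x_j\}=\{\xi_j>x_j\}$. Hence the conditional probabilities in (\ref{equation101}) coincide for $\xi$ and $\xi^+$ on the region where the limit is taken, and the two limit statements are equivalent. For the positivity condition, $P(\xi_i^+>x)=P(\xi_i>x)$ for $x\ge0$, and $P(\xi_i^+>x)=1$ for $x<0$. So positivity for all $x$ for $\xi_i^+$ is equivalent to positivity of $P(\xi_i>x)$ for all $x\ge0$, which in turn is equivalent to positivity for all real $x$ because $x\mapsto P(\xi_i>x)$ is nonincreasing.

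The only step that is not purely mechanical is choosing the counterexample in part (1). The antithetic pair $\xi_2=-\xi_1$ with a symmetric, two-sided unbounded law settles it immediately.
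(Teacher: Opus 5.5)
Your proof is correct and takes essentially the route the paper intends: the paper gives no separate argument for (1)--(3), treating them as immediate from the definitions and citing Example~\ref{exam401} for the strictness in (1). Your antithetic pair $\xi_2=-\xi_1$ with a symmetric, two-sided unbounded law is the symmetric special case of the countermonotone construction $X_1=F^{\leftharpoonup}(1-U)$, $X_2=F^{\leftharpoonup}(U)$ in Example~\ref{exam401}(3), and your checks of (2) and (3) via $|\xi|=\xi$ and $\{\xi^+>x\}=\{\xi>x\}$ for $x\ge 0$ are exactly what is needed.
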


Wang et al. \cite{WWG2013} introduced a substructure of the UTAI structure, characterized by more properties than the original structure. R.v.s $\xi_{i},\ 1\le i\le n,$ are said to be widely upper orthant dependent (WUOD), if there exists a finite positive real number $g_{U}(i)$ such that
\begin{eqnarray}\label{equation103}
&P\big(\bigcap^{i}_{j=1}\{\xi_j > x_j\}\big)\leq g_{U}(i)\prod_{j=1}^i P(\xi_j> x_j)\ \ \ \ \ \ \ \ \text{for all}\ \ \ \ x_j\in(-\infty,\infty),\ 1\leq j\leq i,
\end{eqnarray}
If (\ref{equation103}) is replaced by
\begin{eqnarray}\label{equation104}
&P\big(\bigcap^{i}_{j=1}\{\xi_j \leq x_j\}\big)\leq g_{L}(i)\prod_{j=1}^n P(\xi_j \leq x_j)\ \ \ \ \ \ \ \ \text{for all}\ \ \ \ x_j\in(-\infty,\infty),\ 1\leq j\leq i,
\end{eqnarray}
where $g_{L}(i),\ 1\le i\le n$, are finite positive real numbers, then $\xi_i,\ 1\le i\le n,$ are said to be widely lower orthant dependent (WLOD).
If $\xi_i,\ 1\le i\le n,$ are both WUOD and WLOD, they are called widely orthant dependent (WOD), where $g_U(i)$, $g_L(i)$, $1\le i\le n,$ are called the dominant coefficients of dependence structure.

Especially, when $g_L(i)=g_U(i)=1$ for all $1\le i\le n$ in (\ref{equation103}) and (\ref{equation104}), the r.v.s $\xi_i,\ 1\le i\le n,$ are called negatively upper orthant dependent (NUOD) and negatively lower orthant dependent (NLOD), respectively. If both hold, then they are said to be negatively orthant dependent (NOD); see, e.g., Ebrahimi and Ghosh \cite{EG1981}. Furthermore, the corresponding examples in Section 3 of Wang et al. \cite{WWG2013} and Example 3.1 of Liu et al. \cite{LGW2012} illustrate the relationship among 
these structures.

\begin{pron}\label{pron102}
QUTAI $\supset$ UTAI $\supset$ WUOD $\supset$ NUOD, and these inclusions are all proper.
\end{pron}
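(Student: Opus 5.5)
We prove the three inclusions in turn and then exhibit a counterexample for each strictness.

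\emph{Inclusions.} NUOD $\subset$ WUOD is immediate, since NUOD is the special case $g_U(i)\equiv 1$ of (\ref{equation103}).

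For WUOD $\subset$ UTAI, fix $i\ne j$ and apply (\ref{equation103}) to the pair $(\xi_i,\xi_j)$. Formally we take the other thresholds equal to $-\infty$, or rather let them decrease to $-\infty$ and use continuity of probability. This gives
\[
P(\xi_i>x_i,\xi_j>x_j)\le g\,P(\xi_i>x_i)P(\xi_j>x_j)
\]
with a finite constant $g$. Dividing by $P(\xi_j>x_j)>0$ yields $P(\xi_i>x_i\mid\xi_j>x_j)\le g\,P(\xi_i>x_i)\to0$. The positivity requirement $P(\xi_i>x)>0$ for all $x$ is part of the UTAI definition but not of WUOD, so this inclusion is to be read within the class of r.v.s with right-unbounded supports, as the paper's setting implicitly assumes.

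For UTAI $\subset$ QUTAI, observe that
\[
\frac{P(\xi_i>x_i,\xi_j>x_j)}{P(\xi_i>x_i)+P(\xi_j>x_j)}\le \frac{P(\xi_i>x_i,\xi_j>x_j)}{P(\xi_j>x_j)}=P(\xi_i>x_i\mid \xi_j>x_j)\to0 .
\]

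\emph{Strictness.}
\begin{itemize}
\item WUOD $\ne$ NUOD: take $\xi_1=\xi_2$ up to a mild positive dependence, or use the example in Section 3 of Wang et al.\ \cite{WWG2013}. Concretely, take a bivariate FGM copula with parameter $\theta\in(0,1]$ and unbounded marginals. Then $P(\xi_1>x,\xi_2>y)=\bar F_1\bar F_2(1+\theta F_1F_2)$, which lies between $\bar F_1\bar F_2$ and $(1+\theta)\bar F_1\bar F_2$. Hence the pair is WUOD with $g_U=1+\theta$, and it is not NUOD since $1+\theta F_1F_2>1$ at finite points.
\item UTAI $\ne$ WUOD: we need a pair whose joint tail is $o$ of each marginal but not $O$ of the product. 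Take a Gaussian copula with correlation $\rho\in(0,1)$, which is known to be tail independent, i.e.\ $P(U>u\mid V>u)\to0$ as $u\to1$. Its joint tail behaves like $(1-u)^{2/(1+\rho)}$ up to slowly varying factors, so the ratio to $(1-u)^2$ is unbounded and WUOD fails. A more elementary alternative is a copula that is independent except on a shrinking diagonal strip carrying mass $\bar F(x)^{3/2}$ near $(x,x)$.
\item QUTAI $\ne$ UTAI: Proposition 4.3 of Cheng \cite{C2014}, or a direct construction with very different marginal tails. Let $\xi_2$ have a much lighter tail than $\xi_1$ and let $\xi_1$ be large whenever $\xi_2$ is, e.g.\ $\xi_1=\max(\xi_2^{\,2},\eta)$ with $\eta$ independent and heavy-tailed. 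Then $P(\xi_1>x\mid \xi_2>y)\ge P(\xi_2^2>x\mid\xi_2>y)=1$ whenever $y\ge\sqrt x$, so UTAI fails along such sequences. On the other hand $P(\xi_1>x,\xi_2>y)\le \min\{P(\xi_1>x),P(\xi_2>y)\}$, which is not small relative to the sum in general, so this needs care. A cleaner choice makes $\xi_1$ and $\xi_2$ equal on an event whose probability is negligible relative to $\bar F_1(x)+\bar F_2(y)$ only in one direction; since Cheng's proposition is quoted in the text, we simply cite it.
\end{itemize}

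The main obstacle is constructing the counterexample for UTAI $\not\subset$ WUOD explicitly and verifying both conditions. We plan to rely on the Gaussian copula tail asymptotics, or to defer to the cited examples of Wang et al.\ \cite{WWG2013} and Liu et al.\ \cite{LGW2012}.
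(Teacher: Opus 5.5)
\textbf{Gap: the Gaussian copula does not separate UTAI from WUOD.} Your three inclusion arguments are correct. Letting the other thresholds tend to $-\infty$ is exactly the step needed to get a pairwise bound from the WUOD inequality. Your caveat about the positivity requirement in the UTAI definition is a fair point that the paper glosses over. The FGM copula does separate WUOD from NUOD.

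The problem is that condition (\ref{equation101}) is a two-parameter limit: $x_i$ and $x_j$ tend to infinity independently of each other. A Gaussian copula with $\rho\in(0,1)$ has vanishing tail dependence only along the diagonal, and it is \emph{not} UTAI in the paper's sense. On the normal-score scale, write $Z_1=\rho Z_2+\sqrt{1-\rho^2}\,N$ with $N$ standard normal and independent of $Z_2$. Then
\[
P\big(Z_1>\tfrac{\rho b}{2}\,\big|\,Z_2>b\big)\ge P\big(\sqrt{1-\rho^2}\,N>-\tfrac{\rho b}{2}\big)\to1\qquad(b\to\infty),
\]
even though $\min\{\rho b/2,\,b\}\to\infty$. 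So your main witness violates (\ref{equation101}) and says nothing about UTAI versus WUOD. Your ``diagonal strip'' alternative is too vague to check. You would have to build an actual copula and control the off-diagonal regime $x_j\gg x_i$, which is exactly where the Gaussian example breaks down.

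\textbf{A working replacement.} Let $\eta_1,\eta_2,\zeta$ be independent with $P(\eta_k>x)=x^{-1}$ and $P(\zeta>x)=x^{-3/2}$ for $x\ge1$, and set $\xi_k=\max\{\eta_k,\zeta\}$, $k=1,2$. Split $\{\xi_1>x_1,\xi_2>x_2\}$ into the four events $\{\eta_1>x_1,\eta_2>x_2\}$, $\{\zeta>x_1,\eta_2>x_2\}$, $\{\eta_1>x_1,\zeta>x_2\}$ and $\{\zeta>\max\{x_1,x_2\}\}$. Dividing by $P(\xi_2>x_2)\ge x_2^{-1}$ gives, for $x_1,x_2\ge1$,
\[
P(\xi_1>x_1\mid\xi_2>x_2)\le x_1^{-1}+x_1^{-3/2}+x_1^{-1}x_2^{-1/2}+x_2^{-1/2}\to0\qquad(\min\{x_1,x_2\}\to\infty).
\]
The same bound holds with the indices exchanged, so the pair is UTAI. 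On the other hand, $P(\xi_1>x,\xi_2>x)\ge x^{-3/2}$ while $P(\xi_1>x)P(\xi_2>x)\le 4x^{-2}$, so no finite $g_U(2)$ exists.

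Your abandoned construction for QUTAI $\neq$ UTAI also goes through. Take $\xi_2$ standard exponential and $\eta$ Pareto. Then $P(\xi_1>x_1,\xi_2>x_2)\le P(\xi_2>\sqrt{x_1})+P(\eta>x_1)P(\xi_2>x_2)$, and the denominator is at least $P(\eta>x_1)+P(\xi_2>x_2)$, so the QUTAI ratio tends to $0$. Citing Cheng, as the paper does, is also acceptable; the paper itself gives no argument here beyond references to Wang et al., Liu et al.\ and Cheng.
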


\subsection{\normalsize\bf Distribution classes}

\qquad Throughout this paper, all limits are taken as $x\rightarrow\infty$ unless stated otherwise.
For two positive functions $a(\cdot)$ and $b(\cdot)$, define $C=\limsup a(x)b^{-1}(x).$
We write $a(x)\lesssim b(x)$ or $b(x)\gtrsim a(x)$ if $C\leq1$, write $a(x)\sim b(x)$ if $a(x)\lesssim b(x)$ and $b(x)\lesssim a(x)$, write $a(x)=O\big(b(x)\big)$ if $C<\infty$, write $a(x)\asymp b(x)$ if $a(x)=O\big(b(x))$ and $b(x)=O\big(a(x))$, and write $a(x)=o(b(x))$ if $C=0$.

Let $V$ be a distribution supported on $(-\infty, \infty)$ \big(including $[a,\infty)$ or $(a,\infty)$ for some constant $a\in(-\infty,\infty)$\big) with tail distribution $\overline{V}=1-V$. For each $y>0$, define
\begin{equation*}
\overline{V}^{\ast}(y)=\limsup\overline{V}(xy)\overline{V}^{-1}(x)\ \ \ \ \text{and}\ \ \ \ \overline{V}_{\ast}(y)=\liminf\overline{V}(xy)\overline{V}^{-1}(x).
\end{equation*}
The moment index of $V$ is defined by
\begin{eqnarray*}
&\mathbf{I}_V=\sup\big\{a:\int_0^\infty y^aV(dy)<\infty\big\},
\end{eqnarray*}
and the upper and lower Matuszewska indices of distribution $V$ are given, respectively, by
\begin{eqnarray*}
\mathbf{J}_V^+=-\lim\limits_{y\to\infty}\big(\ln\overline{V}_*(y)\big)\ln^{-1}y\ \ \ \ \text{and}\ \ \ \
\mathbf{J}_V^-=-\lim\limits_{y\to\infty}\big(\ln \overline{V}^*(y)\big)\ln^{-1}y.
\end{eqnarray*}

We say that a distribution $V$ on $(-\infty, \infty)$ belongs to the class of dominatedly varying tailed distributions, denoted by $V\in\mathcal{D}$, if $\overline{V}^{\ast}(y)<\infty$ for all (or, equivalently, for some) $0<y<1$; and belongs to the class of consistently varying tailed distributions, denoted by $V\in\mathcal{C}$, if $\lim_{y\uparrow1}\overline{V}^{\ast}(y)=1$. Moreover, a distribution $V$ on $(-\infty, \infty)$ is said to belong to the class of regularly varying tailed distributions, denoted by $V\in\mathcal{R}_{-\alpha}$ for some $\alpha\ge0$, if for each $y>0$, $\overline{V}^{\ast}(y)=\overline{V}_{\ast}(y)=y^{-\alpha}.$
At this time,
$$\overline{V}(x)\sim x^{-\alpha} L(x),$$
where the function $L(\cdot)$ on $[0, \infty)$ belongs to the slowly varying function class $\mathcal{R}^0_{0}$. And the class $\mathcal{R}^0_{0}$ is a subclass of the function class $\mathcal{R}^0_\gamma$ for some $\gamma\in(-\infty,\infty)$ defined by
$$\mathcal{R}^0_\gamma=\big\{f(\cdot)\ \text{on}\ [0,\infty):\ f(x)>0\ \ \text{for all}\ \ x\ge0\ \ \ \text{and}\ \ \ f(tx)\sim t^{\gamma}f(x)\ \ \ \text{for each}\ \ t>0\big\}.$$
Clearly, if $V\in\mathcal{R}_{-\alpha}$, then $\overline{V}(\cdot)\in\mathcal{R}^0_{-\alpha}$.

For a distribution $V\in\mathcal{D}$, the proposition below is a combination of results in Feller \cite{F1971}, Bingham et al. \cite{BGT1987}, Cline and Samorodnitsky \cite{CS1994}, Embrechts et al. $\cite{EKM1997}$, 
Tang and Tsitsiashvili $\cite{TT2003}$ and Zou et al. \cite{ZWW2012}. For example, see Proposition 3.1 of Zou et al. \cite{ZWW2012} for Proposition \ref{pron103}(1).

\begin{pron}\label{pron103} (1) $V\in\mathscr{D}$ if and only if for each distribution $U$ on $(-\infty,\infty)$ satisfying $\overline{U}(x)=o(\overline{V}(x))$,
there exists a positive function $g(\cdot)$ on $[0,\infty)$ such that
\begin{equation*}
g(x)\downarrow0,\ \ \ \ \ \ \ xg(x)\uparrow\infty\ \ \ \ \ \ \ \text{and}\ \ \ \ \ \ \ \overline{U}\big(xg(x)\big)=o\big(\overline{V}(x)\big).
\end{equation*}

(2) If $V\in\mathcal{D}$, then $0\le \mathbf{J}_V^-\le \mathbf{I}_V\le \mathbf{J}_V^+<\infty$.

(3) If $V\in\mathcal{D}$, then $\overline{V}(x)=o(x^{-p})$ for each $p<\mathbf{J}_V^-$ and $x^{-p}=o\big(\overline{V}(x)\big)$ for each $p>\mathbf{J}_V^+$.

(4) If $V\in\mathcal{D}$, then for each $p>\mathbf{J}_V^+$, there exist two positive constants $C_1=C_1(V,p)$ and $C_2=C_2(V,p)$ such that
$$\overline{V}(y)\overline{V}^{-1}(x)\le C_1(xy^{-1})^{p}\ \ \ \ \ \ \ \ \ \ \ \text{for each pair}\ \ x\ge y\ge C_2.$$
\end{pron}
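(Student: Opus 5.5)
The statement collects four known facts about $\mathcal{D}$, so the plan is to reduce each part to standard $O$-regular variation theory (Bingham et al. \cite{BGT1987}, Section 2.1--2.2). Only part (1) needs a genuine construction. Throughout, I use that $V\in\mathcal{D}$ means $\overline{V}^*(t)<\infty$ for every $t\in(0,1)$. This holds because $\overline V^*(\cdot)$ is submultiplicative, $\overline V^*(st)\le \overline V^*(s)\overline V^*(t)$, so finiteness at one $t\in(0,1)$ propagates to all $t\in(0,1)$ by iteration.

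For (2)--(4), I would first note that $\overline{V}^*$ and $\overline V_*$ are submultiplicative and supermultiplicative respectively. By Fekete's lemma this gives the existence of the limits defining $\mathbf{J}_V^\pm$, and $\mathbf J_V^+<\infty$ exactly because $V\in\mathcal{D}$.
\begin{itemize}
\item Part (4) is the Potter-type bound of Bingham et al. (Proposition 2.2.1) applied to $f=\overline V$, which is $O$-regularly varying when $V\in\mathcal D$.
\item Part (3) follows by letting $x\to\infty$ in (4) with $y$ fixed, which gives $x^{-p}=o(\overline V(x))$ for $p>\mathbf J_V^+$. The dual Potter bound for $p<\mathbf J_V^-$ gives $\overline V(x)=o(x^{-p})$.
\item Part (2) then comes from integrating tails: $\int_0^\infty y^a V(dy)<\infty$ if $a<\mathbf J_V^-$ by (3), and $=\infty$ if $a>\mathbf J_V^+$ by (3). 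Hence $\mathbf J^-_V\le \mathbf I_V\le\mathbf J^+_V$. Finally $\mathbf J_V^-\ge0$ because $\overline V^*(y)\le1$ for $y\ge1$.
\end{itemize}

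For the forward direction of (1), fix $t\in(0,1)$ and write
$$\frac{\overline U(xt)}{\overline V(x)}=\frac{\overline U(xt)}{\overline V(xt)}\cdot\frac{\overline V(xt)}{\overline V(x)}.$$
The first factor tends to $0$ and the second has $\limsup$ at most $\overline V^*(t)<\infty$, so the ratio tends to $0$. Choose $x_1<x_2<\cdots$ with $x_{k+1}\ge \frac{k+1}{k}x_k$ such that $\overline U(x/(k+1))\le k^{-1}\overline V(x)$ for all $x\ge x_k$. Then define
$$g(x)=\max\Big\{\frac1{k+1},\frac{x_k}{kx}\Big\}\qquad\text{for } x\in[x_k,x_{k+1}).$$
\begin{itemize}
\item At $x_k$ the value is $1/k$, which matches the left limit of the previous piece thanks to the growth condition on $x_k$. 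So $g$ is continuous and nonincreasing, with $g\to0$.
\item The function $xg(x)=\max\{x/(k+1),x_k/k\}$ is nondecreasing and tends to $\infty$.
\item Since $g\ge 1/(k+1)$ on $[x_k,x_{k+1})$, we get $\overline U(xg(x))\le \overline U(x/(k+1))\le k^{-1}\overline V(x)$.
\end{itemize}
Strict monotonicity, if required, can be obtained by a small perturbation.

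For the converse I argue by contraposition. If $V\notin\mathcal D$, there is $x_n\uparrow\infty$ with $r_n=\overline V(x_n/2)/\overline V(x_n)\to\infty$. I would build $\overline U(x)=\sup_{y\ge x}\overline V(y)h(y)$, where $h\downarrow0$ is chosen so that $h(x_n/2)\ge r_n^{-1/2}$, with the $x_n$ sparse enough that this is compatible with monotonicity. Then $\overline U(x)=o(\overline V(x))$ still holds, which needs checking because the sup is taken over $y\ge x$ and $\overline V h$ is decreasing in $y$. On the other hand, any $g\downarrow0$ satisfies $x_ng(x_n)\le x_n/2$ eventually, so
$$\overline U\big(x_ng(x_n)\big)\ge \overline U(x_n/2)\ge r_n^{1/2}\,\overline V(x_n)\ne o\big(\overline V(x_n)\big).$$
The main obstacle is this converse: calibrating $h$ and passing to a subsequence of $x_n$ so that $\overline U$ is a genuine tail and still $o(\overline V)$. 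The remaining steps are bookkeeping.
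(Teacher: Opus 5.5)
Your proposal is correct. Note that the paper gives no proof of this proposition: it is quoted from the literature, with (2)--(4) being standard Matuszewska-index facts (Bingham et al.) and (1) attributed to Proposition 3.1 of Zou et al. Your reduction of (2)--(4) to the Potter-type bounds is exactly what that citation hides. To be precise, (4) is part (ii) of Proposition 2.2.1 of Bingham et al.\ applied to $f=\overline V$, because $\beta(\overline V)=-\mathbf J_V^+$ (equivalently, use $\overline V_*(y)=1/\overline V^*(1/y)$). For (1) you replace the citation by a self-contained diagonal construction plus an explicit counterexample for the converse, and this is what your write-up genuinely adds. A few steps need small repairs.

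\textbf{Forward direction of (1).} The constraint $x_{k+1}\ge\frac{k+1}{k}x_k$ only makes $x_k/k$ nondecreasing, not divergent. On $[x_k,x_{k+1})$ the function $xg(x)$ takes values in $[x_k/k,\,x_{k+1}/(k+1)]$, so your claim $xg(x)\to\infty$ fails for the admissible choice $x_k=kx_1$. This choice is admissible, for instance, when $\overline U\equiv0$ on $[0,\infty)$. The fix is to also require $x_k\ge k^2$, which is harmless since the thresholds can always be enlarged. You should also set $g\equiv1$ on $[0,x_1)$ so that $g$ is defined on $[0,\infty)$.

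\textbf{Parts (2) and (3).} In (3), apply (4) with an exponent $p'\in(\mathbf J_V^+,p)$, and dually with $p'\in(p,\mathbf J_V^-)$. With the same $p$ you only obtain $O(\cdot)$, not $o(\cdot)$. In (2), use only $a\in[0,\mathbf J_V^-)$: for $a<0$ the integral $\int_0^\infty y^aV(dy)$ can diverge because of mass near $0$. This does not affect the conclusion, since $a=0$ is always admissible.

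\textbf{Converse of (1).} This is easier than you expect. Since $\overline V h$ is nonincreasing, $\sup_{y\ge x}\overline V(y)h(y)=\overline V(x)h(x)$, so you can simply put $\overline U=\overline V h$. Pass to a subsequence with $x_n$ and $r_n$ increasing. Choose $h$ continuous and nonincreasing, with $h=1$ on $(-\infty,0]$, $h(x_n/2)=r_n^{-1/2}\le1$, and linear interpolation in between. Then $\overline U$ is a genuine tail, $\overline U/\overline V=h\to0$, and your final display gives $\overline U\big(x_ng(x_n)\big)\ge r_n^{1/2}\,\overline V(x_n)$ for every $g\downarrow0$, which is the required contradiction.
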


We say that a distribution $V$ on $(-\infty, \infty)$ belongs the class of long-tailed distributions, denoted by $V\in\cal{L}$, if for all (or, equivalently, for some) fixed $y\in(-\infty, \infty)$,
\begin{equation*}
\overline{V}(x+y)\sim\overline{V}(x).
\end{equation*}
Furthermore, a distribution $V$ on {$[0, \infty)$ is said to belong to the class of subexponential distributions, denoted by $V\in\cal{S}$}, if
\begin{equation*}
\overline{V^{*2}}(x)\sim2\overline{V}(x),
\end{equation*}
where $V^{*2}$ denotes the convolution of $V$ with itself. If $V$ has support on $(-\infty,\infty)$,
then $V\in\mathcal{S}$ if $V_+\in\mathcal{S}$, where $V_+(\cdot)=V(\cdot)\textbf{1}_{[0,\infty)}(\cdot)$. The class $\mathcal{S}$ was introduced by Chistyakov \cite{C1964}; and for comprehensive treatments, see Embrechts et al. \cite{EKM1997}, Foss et al. \cite{FKZ2013}, Wang \cite{W2022}, Leipus et al. \cite{LSK2023}, Wang et al. \cite{WCX2026}, and others.

A useful property of the long-tailed distribution is their insensitivity to shifts, which follows from Corollary 2.5 of Cline and Samorodnitsky \cite{CS1994}; see also Proposition \ref{pron104}(1) below. For a systematic summary of the insensitivity, see Foss et al. \cite{FKZ2013}. Further discussion can also be found in Remark 1.2 of Zhou et al. \cite{ZWW2012}; see also Proposition \ref{pron104}(2).

Define a set of functions as
\begin{equation}\label{equation105}
\mathcal{H}_V=\big\{h(\cdot):0<h(x)\uparrow\infty,\ h(x)x^{-1}\downarrow0\ \text{and}\ \overline{V}(x+y)\sim\overline{V}(x)\ \text{holds uniformly for}\ |y|\le h(x)\big\}.
\end{equation}
If $h(x)x^{-1}\downarrow0$ in (\ref{equation105}) is replaced by $h(x)x^{-1}\to0$, then the corresponding set is denoted by $\mathcal{\widehat{H}}_V$.

\begin{pron}\label{pron104}
(1) A distribution $V\in\cal{L}$ if and only if the set $\mathcal{H}_V$ is not empty.

(2) If a function $h(\cdot)\in\mathcal{H}_V$ for a distribution $V\in\mathcal{L}$, then $ch(\cdot)\in\mathcal{H}_V$ for each $c>0$.

(3) Under the condition of (2), there exists a positive function $f(\cdot)$ on $[0,\infty)$ satisfying $f(x)\uparrow\infty$ such that function $h_1(\cdot)=f(\cdot)h(\cdot)\in\mathcal{\widehat{H}}_V$.
\end{pron}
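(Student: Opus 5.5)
We prove the three parts in order. Part (1) is standard, part (2) follows from (1), and part (3) is a direct construction.

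\emph{Part (1).} If $\mathcal{H}_V\neq\emptyset$, take any $h\in\mathcal{H}_V$. Since $h(x)\uparrow\infty$, every fixed $y$ satisfies $|y|\le h(x)$ for large $x$, so $\overline{V}(x+y)\sim\overline{V}(x)$ and $V\in\mathcal{L}$. For the converse, assume $V\in\mathcal{L}$ and use the classical diagonal construction.
\begin{itemize}[leftmargin=*]
\item For each integer $k\ge1$, long-tailedness and monotonicity of $\overline{V}$ give a level $x_k\uparrow\infty$ with $x_{k+1}\ge 2x_k$, $x_k\ge k^2$, and
$$\sup_{x\ge x_k}\big|\overline{V}(x-k)\overline{V}^{-1}(x+k)-1\big|\le 1/k.$$
Because $\overline{V}$ is nonincreasing, $\overline{V}(x+k)\le\overline{V}(x+y)\le\overline{V}(x-k)$ for all $|y|\le k$, so this yields uniformity for $|y|\le k$ once $x\ge x_k$.
\item Define a step function $h_0(x)=k$ on $[x_k,x_{k+1})$, and $h_0(x)=1$ for $x<x_1$.
\item To get $h(x)x^{-1}\downarrow0$, set $h(x)=\min\{h_0(x),\sqrt{x}\}$ for $x\ge x_1$ and smooth it. Concretely, let $h(x)=x\cdot\inf_{t\le x}\big(h^{\sharp}(t)/t\big)$, where $h^{\sharp}$ is a continuous nondecreasing minorant of $h_0$ tending to $\infty$.
\end{itemize}
The last step is the delicate one: we need $h\uparrow$ and $h/x\downarrow$ simultaneously while keeping $h\le h_0$ and $h\to\infty$. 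The function $x\inf_{t\le x}(g(t)/t)$ is automatically one with $h/x$ nonincreasing. It is nondecreasing in $x$ if $g$ is nondecreasing: on any stretch where the infimum is attained at $x$ itself, $h=g$ there; elsewhere $h$ equals a constant times $x$. It tends to $\infty$ because $g(t)/t\ge\sqrt{t}^{\,-1}$-type bounds are ensured by $x_k\ge k^2$. I expect verifying $h\to\infty$ for this regularization to be the main obstacle; the plan is to compare it against $\sqrt{x}$ using the growth $x_k\ge k^2$.

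\emph{Part (2).} Let $c>0$. If $c\le1$, $ch$ inherits the monotonicity properties of $h$ and $\{|y|\le ch(x)\}\subset\{|y|\le h(x)\}$, so $ch\in\mathcal{H}_V$. If $c>1$, pick an integer $m\ge c$ and write any $|y|\le ch(x)$ as $y=\sum_{j=1}^m y_j$ with $|y_j|\le h(x)$. Then telescope:
$$\frac{\overline{V}(x+y)}{\overline{V}(x)}=\prod_{j=1}^m\frac{\overline{V}(x+s_j)}{\overline{V}(x+s_{j-1})},\qquad s_j=y_1+\dots+y_j .$$
Each factor compares $\overline{V}(z+y_j)$ with $\overline{V}(z)$ at $z=x+s_{j-1}$. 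We need $|y_j|\le h(z)$. Since $z\ge x-mh(x)\ge x/2$ for large $x$, and since $h(x)/x\downarrow$ gives $h(x/2)\ge h(x)/2$, we have $h(z)\ge h(x)/2$. So we split into $2m$ pieces of size at most $h(x)/2$ instead. Each factor is then $1+o(1)$ uniformly, and hence so is the finite product.

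\emph{Part (3).} Let $h\in\mathcal{H}_V$, and write $\varepsilon(t)=\sup_{x\ge t}\sup_{|y|\le h(x)}|\overline{V}(x+y)\overline{V}^{-1}(x)-1|$, which tends to $0$. By (2), $kh\in\mathcal{H}_V$ for every integer $k$. Choose $t_k\uparrow\infty$ with two properties: the uniform error for $kh$ is at most $1/k$ on $[t_k,\infty)$, and $kh(x)/x\le 1/k$ for $x\ge t_k$, which is possible since $h(x)/x\to0$. Let $f(x)=k$ on $[t_k,t_{k+1})$, made nondecreasing and unbounded (a step function is acceptable, since $f\uparrow\infty$ only requires monotonicity). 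Then $h_1=fh$ is positive and increasing, and $h_1(x)/x\le 1/k\to0$. Moreover, $\overline{V}(x+y)\sim\overline{V}(x)$ uniformly for $|y|\le h_1(x)$ by the choice of $t_k$. Monotonicity of $h_1/x$ is not required, which is why the conclusion is only membership in $\widehat{\mathcal{H}}_V$.
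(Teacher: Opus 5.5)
Your proof is correct.

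\textbf{Part (3).} This is essentially the paper's proof. The paper also uses part (2) to get $nh\in\mathcal{H}_V$ for every integer $n$, chooses thresholds $x_n$ with $x_{n+1}>2x_n$ and $nh(x_n)x_n^{-1}\to0$, and multiplies $h$ by a function of size about $n$ on $[x_n,x_{n+1})$. The differences are cosmetic.
\begin{itemize}
\item The paper's $f$ is continuous and piecewise linear, rising from $n$ to $n+1$ on $[x_n,x_{n+1})$. It therefore needs the estimate for $(n+1)h$ there, and uses $h(x)/x\downarrow$ to bound $f(x)h(x)/x$ by $(n+1)h(x_n)/x_n$.
\item Your step function gives $h_1=kh$ exactly on $[t_k,t_{k+1})$, and you build $\sup_{x\ge t_k}kh(x)/x\le 1/k$ into the choice of $t_k$.
\item You also control both shift directions $|y|\le h_1(x)$ at once. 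The paper writes out only the shift $x-h_1(x)$.
\end{itemize}

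\textbf{Parts (1) and (2).} The paper states these without proof, referring to Corollary 2.5 of Cline and Samorodnitsky and to a remark of Zou et al. Your diagonal construction for (1) is a valid self-contained replacement. So is your telescoping argument for (2), which splits $|y|\le ch(x)$ into $2m$ pieces of size at most $h(x)/2$ and uses $h(z)\ge h(x/2)\ge h(x)/2$ for $z\ge x/2$.

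\textbf{One correction in (1).} The step you flag as the main obstacle is not one, and the reason you give for it points the wrong way.
\begin{itemize}
\item The growth $x_k\ge k^2$ gives an \emph{upper} bound: $g(x)/x\le h_0(x)/x\le k/x_k\le 1/k$ on $[x_k,x_{k+1})$. That is exactly what you need for $h(x)/x=\inf_{0<t\le x}g(t)/t\to0$. This is a requirement of $\mathcal{H}_V$ that you did not check explicitly.
\item The divergence $h\to\infty$ holds for any positive, continuous, nondecreasing, unbounded $g$, with no growth condition on the $x_k$. Split the infimum at $t=\sqrt{x}$. For $0<t\le\sqrt{x}$ use $x/t\ge\sqrt{x}$; for $\sqrt{x}\le t\le x$ use $x/t\ge1$. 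This gives, for $x\ge1$,
\[
h(x)=\inf_{0<t\le x}\frac{x\,g(t)}{t}\ \ge\ \min\big\{\sqrt{x}\,g(0),\ g(\sqrt{x})\big\}\ \longrightarrow\ \infty .
\]
\end{itemize}
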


\proof We only need to prove (3). For each integer $n\ge1$, Proposition \ref{pron104}(2) implies that both $nh(\cdot)$ and $(n+1)h(\cdot)$ belong to $\mathcal{H}_V$. Hence there exists $x_n=x_n\big(V,h(\cdot)\big)>0$ large enough such that, when $x\ge x_n$,
\begin{eqnarray*}
&1-n^{-1}<\overline{V}\big(x-nh(x)\big)\overline{V}^{-1}(x)\le\overline{V}\big(x-(n+1)h(x)\big)\overline{V}^{-1}(x)<1+(n+1)^{-1}<1+n^{-1}.
\end{eqnarray*}
Set
$$x_{n+1}>2x_n,\ \ \ n\ge1,\ \ \ \ \ \ \ \text{and}\ \ \ \ \ \ \ nh(x_n)x_n^{-1}\to0\ \ \ \text{as}\ \ \ n\to\infty;$$
and further define a function $f(\cdot)$ as
\begin{eqnarray*}
&f(x)=\textbf{1}_{[0,x_1)}(x)+\sum_{n=1}^\infty\Big(\frac{x-x_n}{x_{n+1}-x_n}+n\Big)\textbf{1}_{[x_n,x_{n+1})}(x),\ \ \ \ \ x\ge0.
\end{eqnarray*}
Let $h_1(\cdot)=f(\cdot)h(\cdot)$. Clearly, $h_1(x)\uparrow\infty$, and when $x\in[x_n,x_{n+1})$, since $f(x)\leq n+1$ and $h(x)/x$ is decreasing, then
$$f(x)h(x)x^{-1}\le(n+1)h(x_n)x_n^{-1}\to0,\ \ \ \ \ \ \ \ \text{as}\ \ \ n\to\infty,$$
and
\begin{eqnarray*}
&1-n^{-1}<\overline{V}\big(x-h_1(x)\big)\overline{V}^{-1}(x)\le\overline{V}\big(x-(n+1)h(x)\big)\overline{V}^{-1}(x)<1+(n+1)^{-1}<1+n^{-1},
&\end{eqnarray*}
which yields that $h_1(\cdot)\in\mathcal{\widehat{H}}_V$.
$\hspace{\fill}\Box$\\

For the above distribution classes, the following inclusion relationships are proper, namely,
\begin{eqnarray*}
&\mathcal{R}=\bigcup_{\alpha\ge0}\mathcal{R}_{-\alpha}\subset\mathcal{C}\subset\mathcal{L}\cap\mathcal{D}\subset\mathcal{S}\subset\mathcal{L}.
\end{eqnarray*}
However, the classes $\mathcal{L}$ and $\mathcal{D}$ are not mutually inclusive; see, for example, the famous Peter-Paul distribution in Goldie \cite{G1978}.

In this paper, our main focus is on the intersection class $\mathcal{L}\cap\mathcal{D}$ and its subset $\mathcal{R}$.

\subsection{\normalsize\bf Main results}

\qquad In this subsection, we study the asymptotics of randomly weighted sums of UTAI and QUTAI primary r.v.s, which is the main result of this paper and has important applications in risk theory, see Section 3. And further discussions on their conditions and conclusions are delayed to the Appendix in Section 4.
For related works, we refer to Resnick and Willekens \cite{RW1991}, Tang and Tsitsiashvili \cite{TT20032}, Wang and Tang \cite{WT2006}, Zhang et al. \cite{ZSW2009}, Chen and Yuen \cite{CY2009}, Yi et al. \cite{YCS2011}, Hazra and Maulik \cite{HM2012}, Olvera-Cravioto \cite{OC2012}, Li \cite{L2013}, Tang and Yuan \cite{TY2014},
Cheng \cite{C2014}, Cheng and Cheng \cite{CC2018}, Li \cite{L2018}, Yang et al. \cite{YCY2024}, Wang et al. \cite{WMG2026}, and so on. As mentioned earlier, most existing results impose moment conditions on the random weights and dependence structures among the primary r.v.s. We now revisit Theorem 2.3 of Li \cite{L2013}.

\noindent\textbf{Theorem 1.A.} Let $X_{i},\ 1\le i\leq n,$ be TAI r.v.s with  distribution $F_i$ belonging to $\mathcal{L}\cap\mathcal{D}$ on $(-\infty, \infty),\ 1\le i\leq n,$ and let $W_{i},\ 1\le i\leq n,$ be positive r.v.s independent of $X_{i},\ 1\le i\leq n$. If $EW^p_i<\infty$ for some $p>\max\{\mathbf{J}_{F_i}^+,\ 1\le i\le n\},\ 1\le i\le n$, then
\begin{eqnarray}\label{101100}
&P\big(\sum_{i=1}^{n}W_{i}X_{i}>x\big)\sim\sum_{i=1}^{n}P(W_{i}X_{i}>x).
\end{eqnarray}

Henceforth, we respectively call $X_i$, $W_i$, and $W_iX_i$, $1\le i\le n$, the primary r.v.s, the random weights, and the increments of the sum $\sum_{i=1}^{n}W_{i}X_{i}$. Accordingly, we call $\sum_{i=1}^{n}W_{i}X_{i}$ the randomly weighted sum. Let $G_i$ and $H_i$ be the distributions of $W_i$ and $Z_i=X_iW_{i}$, respectively, $1\le i\le n$. Write
$$\mathbf{J}^+_0=\max\{\mathbf{J}_{F_i}^+,\ 1\le i\le n\}.$$
In Theorem 1.A, the random weights satisfy the conditions described as follows:\\
Case $1^*$: $EW^p_i<\infty$ for some $p>\mathbf{J}^+_0,\ 1\le i\le n$.\\
And the complementary case is denoted as Case $2^*$. Hence, Theorem 1.A naturally leads to the following two questions.

\vspace{0.2cm} \noindent\textbf{Question 1}. Can we remove the moment conditions on the random weights? 
In other words, what results can we get in Case $2^*$? 

\vspace{0.2cm} \noindent\textbf{Question 2}. Can the asymptotic relation (\ref{101100}) be extended to the case when the primary r.v.s follow the UTAI, or even QUTAI, structure, and their random weights require no moment conditions?

For Question 1, positive results have been obtained under additional conditions different from those in Theorem 1.A, and Theorems 1.1, 1.2 and Corollary 1.1 of Cheng \cite{C2014}. We here only recall one result of the latter involving QUTAI r.v.s.

\noindent\textbf{Theorem 1.B.} Let $X_{i},\ 1\le i\le n,$ be QUTAI r.v.s with distribution $F_i$ on $(-\infty, \infty),\ 1\le i\le n$, and let $W_{i},\ 1\le i\le n,$ be positive r.v.s, independent of $X_{i}, \ 1\le i\le n$. If $F_i\in\mathcal{C},\ 1\le i\le n$, and for any $u>0$,
\begin{equation}\label{1071}
\overline{G_i}(ux)=o\big(\overline{H_{i}}(x)\big),\ \ \ \ \ \ \ \ \ 1\le i\le n,
\end{equation}
then relation (\ref{101100}) holds.

Since the class $\mathcal{C}$ is properly included in $\mathcal{L}\cap\mathcal{D}$, we first establish the asymptotic relation (\ref{101100}) with the distributions of the primary r.v.s belonging to $\mathcal{L}\cap\mathcal{D}$.


Before we present our main results, the significance of this discussion is briefly explained below.

In several areas of stochastic processes, such as stochastic recurrence equations or in linear processes, the behavior of the series
\begin{eqnarray} \label{115}
\sum_{i=1}^{\infty}E\max\{W_i^{\alpha+\delta},W_i^{\alpha-\delta}\}\ \ \ \ \ \ \text{for some}\ \ \ \ \delta>0,
\end{eqnarray}
under the assumption $F \in \mathcal{R}_{-\alpha}$, is important to determine the memory of the process (if the series of (\ref{115}) converges, then we have short memory, while if it diverges we have long memory). It is sometimes mistakenly believed that the long range dependence `breaks' the single big jump principle. However, this is not generally true, as shown in Theorem \ref{thm102} above, which also includes the situation that the sums of the finite number of terms, in (\ref{115}) diverge. In light of the single big jump in comparison with the dependence structures, the conjecture from Ko and Tang \cite{KT2008} that `the single big jump remains valid when the dependence structures are not positive enough' holds when the dependence concerns the primary r.v.s $X_i,\ i\ge1$, rather than the random weights $W_i,\ i\ge1$. An example where the single big jump `fails' because of the strong dependence among the primary r.v.s can be found in Mikosch and Samorodnitsky \cite{MS2000}.

We now give the first result of this paper, which answers Questions 1 and 2 with a different idea from those in the existing results, and can serve as a complement to Theorems 1.A and 1.B.

\begin{thm}\label{thm102}
(1) Let $X_i,\ 1\le i\le n,$ be TAI r.v.s with distributions $F_i$ belonging to $\cal{L}\cap\cal{D}$ on $(-\infty, \infty),\ 1\le i\le n$, and let $W_{i},\ 1\le i\le n,$ be r.v.s, and independent of $X_{i},\ 1\le i\le n$. For each function $h(\cdot)\in\prod_{i=1}^n\mathcal{H}_{F_i}$, there exists a positive function $f_1(\cdot)=f_1\big(\cdot;h(\cdot)\big)$ on $(0,\infty)$ such that
\begin{eqnarray}\label{10711}
&1\ge f_1(x)\downarrow0\ \ \ \ \ \ \ \ \text{and}\ \ \ \ \ \ \ \ h_1(\cdot)=h(\cdot)f^{-1}_1(\cdot)\in\prod_{i=1}^n\mathcal{\widehat{H}}_{F_i}.
\end{eqnarray}
If there exists a $h(\cdot)\in\prod_{i=1}^n\mathcal{H}_{F_i}$ satisfying
\begin{eqnarray}\label{110001}
&\overline{G_i}\big(h(x)\big)=o\big(\sum_{j=1}^n\overline{H_{j}}(x)\big),\ \ \ \ \ \ \ \ \ 1\le i\le n,
\end{eqnarray}
then there exists a positive function $f_2(\cdot)=f_2\big(\cdot;h(\cdot)\big)$ on $(0,\infty)$ such that
\begin{eqnarray}\label{110001a}
&1\le f_2(x)\uparrow\infty,\ \ \ \ \ \ \ \ f_2(x)h^{-1}(x)\downarrow0,
\end{eqnarray}
and
\begin{eqnarray}\label{11000}
&\overline{G_i}\big(f_2(x)\big)=o\big(\sum_{j=1}^n\overline{H_{j}}(x)\big),\ \ \ \ \ \ \ 1\le i\le n.
\end{eqnarray}
{For the above $h(\cdot)$, further if
\begin{equation}\label{110000}
G_i\big(f_1(x)\big)=o\big(h^{-p}(x)\big),\ \ \ \ \ \ \ \ \ 1\le i\le n,
\end{equation}
for some constant $p>\mathbf{J}_0^+$, then relation (\ref{101100}) holds.}

(2) In (1), let $X_{i}$, $1\le i\le n,$ be UTAI r.v.s. Then relation (\ref{101100}) still holds, if the above functions $h(\cdot)$ and $f_2(\cdot)$ satisfy the condition that, when $n\ge2$,
\begin{eqnarray}\label{10800a}
&F_i\big(-h(x)f_2^{-1}(x)(n-1)^{-1}\big)=o\big(\sum_{j=1}^n\overline{F_j}(x)\big),\ \ \ \ \ \ \ \ \ 1\le i\le n.
\end{eqnarray}
\end{thm}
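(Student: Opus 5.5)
\noindent The first two assertions are constructions of auxiliary functions, and the asymptotic relation (\ref{101100}) is proved by splitting according to the size of the weights.

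\textbf{Construction of $f_1$.} Since $h(\cdot)\in\mathcal{H}_{F_i}$ for every $1\le i\le n$, Proposition \ref{pron104}(3) gives functions $g_i(\cdot)\uparrow\infty$ with $g_i(\cdot)h(\cdot)\in\mathcal{\widehat H}_{F_i}$. I will take $g=\min\{g_1,\dots,g_n,\cdot\}$, truncated from below by $1$. Because $gh\le g_ih$, the relation $\overline{F_i}(x+y)\sim\overline{F_i}(x)$ remains uniform for $|y|\le g(x)h(x)$. Moreover $g(x)h(x)\uparrow\infty$ and $g(x)h(x)x^{-1}\to0$. Setting $f_1=1/g$ then gives (\ref{10711}).

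\textbf{Construction of $f_2$.} I would use a diagonal argument. By Proposition \ref{pron103}(1) applied to the distribution with tail $n^{-1}\sum_j\overline{H_j}$, which belongs to $\mathcal{D}$ (since $F_j\in\mathcal{D}$ and $W_j$ is independent of $X_j$, $H_j\in\mathcal{D}$; split according to whether $x/W_j$ is large or bounded), the hypothesis (\ref{110001}) will be upgraded to $\overline{G_i}(h(x)/k)=o(\sum_j\overline{H_j}(x))$ for each fixed $k$. I then choose $x_1<x_2<\cdots$, with $x_{k+1}>2x_k$, such that this ratio is below $1/k$ for $x\ge x_k$. On $[x_k,x_{k+1})$ I take $f_2(x)=h(x)/\phi(x)$, with $\phi$ interpolating piecewise linearly from $k$ to $k+1$ (as in the proof of Proposition \ref{pron104}(3)). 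This is adjusted so that $f_2\uparrow\infty$ and $f_2/h\downarrow0$, which gives (\ref{110001a}) and (\ref{11000}).

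I expect the upgrade to the fixed-$k$ statement to be the main obstacle. Monotonicity of $h(x)/x$ only gives $h(x/k)\ge h(x)/k$, which points the wrong way. If the upgrade cannot be derived from the dominated variation of $\sum_j\overline{H_j}$, I would instead slow down $\phi$ enough that $h(x)/\phi(x)$ stays close to $h$ on a logarithmic scale. In that case I would use $h\in\mathcal{H}$ together with Proposition \ref{pron104}(2).

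\textbf{Proof of (\ref{101100}).} For each $i$, I split according to $A_i=\{W_i\le f_1(x)\}$, $B_i=\{f_1(x)<W_i\le f_2(x)\}$ and $C_i=\{W_i>f_2(x)\}$. By (\ref{11000}), $P(\bigcup_iC_i)=o(\sum_j\overline{H_j}(x))$, which is negligible for both bounds.

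On $A_i$, with $p>\mathbf{J}_0^+$, I will combine (\ref{110000}) with Proposition \ref{pron103}(3),(4): $h^{-p}(x)$ is small compared with $\overline{F_i}(x/h(x))$, and hence with $\overline{H_i}(x)$, up to constants. The increments $W_iX_i$ on $A_i$ then contribute either $o(\sum_j\overline{H_j}(x))$, or at most a shift of size $h_1(x)$, which $\mathcal{\widehat H}$ absorbs.

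On $\bigcap_iB_i$, I condition on the weights $(w_i)$ and use the single-big-jump decomposition. For the upper bound, $\sum_iw_iX_i>x$ forces either $w_iX_i>x-h(x)$ for some $i$, or $w_iX_i>h(x)/(n-1)$ and $w_jX_j>h(x)/(n-1)$ for some pair $i\neq j$. The first part is controlled by the uniform long-tailedness of $F_i$ on the range $w_i\in(f_1,f_2]$, thanks to $h_1=h/f_1\in\mathcal{\widehat H}_{F_i}$. The pair terms are handled by TAI (respectively UTAI), since $h(x)/((n-1)w_j)\ge h(x)/((n-1)f_2(x))\to\infty$; they are then $o(\overline{F_i}(x/w_i))$ uniformly, using $F_i\in\mathcal{D}$. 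Integrating over the weights gives $\lesssim\sum_iP(W_iX_i>x)$.

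For the lower bound, I use $P(\sum_iW_iX_i>x)\ge\sum_iP(W_iX_i>x+h(x),\ W_jX_j>-h(x)/(n-1)\ \forall j\ne i)$ minus the pairwise terms. In part (1), TAI makes $P(X_j\le -h/((n-1)w_j)\mid X_i\text{ large})\to0$. In part (2), under UTAI only, this step is replaced by the hypothesis (\ref{10800a}): it bounds $P(X_j\le -h(x)f_2^{-1}(x)(n-1)^{-1})$ by $o(\sum_j\overline{F_j}(x))$, and therefore by $o(\sum_j\overline{H_j}(x))$ on $B_j$. Combined with $\overline{H_i}(x+h(x))\sim\overline{H_i}(x)$, this gives (\ref{101100}).
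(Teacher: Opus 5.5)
Your construction of $f_1$ and your overall plan for (\ref{101100}) match the paper's. That plan is: cut the weights at $f_1$ and $f_2$, use a single big jump on the middle range with windows $h$ and $h/f_1$, and handle the pair terms by TAI/UTAI. The construction of $f_2$, however, has a gap that no argument can close for a fixed $h$. Your fixed-$k$ upgrade is false, as the following example shows.
\begin{itemize}
\item Take $\overline{F_i}(x)=x^{-\alpha}$ for $x\ge1$ with the $X_i$ independent.
\item Take $h(x)=\ln(e+x)$. It lies in $\bigcap_i\mathcal H_{F_i}$, because regular variation gives uniform long-tailedness on any $o(x)$ window.
\item Take $W_i$ exponential with rate $\beta>\alpha$.
\end{itemize}
Then $\sum_j\overline{H_j}(x)\asymp x^{-\alpha}$ and $\overline{G_i}(h(x))=(e+x)^{-\beta}$, so (\ref{110001}) holds. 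But $\overline{G_i}(h(x)/k)=(e+x)^{-\beta/k}$ is not $o(x^{-\alpha})$ once $k\ge\beta/\alpha$. Worse, any $f_2$ with $f_2/h\to0$ eventually satisfies $f_2\le\alpha h/(2\beta)$, hence $\overline{G_i}(f_2(x))\ge(e+x)^{-\alpha/2}$. So no $f_2$ satisfying (\ref{110001a}) and (\ref{11000}) exists, and slowing down $\phi$ cannot help. Taking $f_1(x)=\ln^{-p-1}(e+x)$ even makes (\ref{110000}) hold, while (\ref{101100}) itself holds there by Theorem 1.A.

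The obstruction is the one you sensed. Dominated variation controls $\overline V(x/k)/\overline V(x)$, but writing $h(x)/k=h(x')$ may force $x/x'\to\infty$ when $h$ grows slowly. The paper's one-line justification ($H_i\in\mathcal D$ by Cline--Samorodnitsky, then Proposition \ref{pron103}(1)) has the same defect. Applied to the law of $h^{\leftarrow}(W_i)$, it only yields $\overline{G_i}(h(xg(x)))=o(\sum_j\overline{H_j}(x))$ for some slowly decreasing $g$, and nothing forces $h(xg(x))/h(x)\to0$.

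For part (1) the conclusion can be rescued by enlarging the window instead of shrinking the cutoff. Put $f_2:=h$, so that (\ref{11000}) is just (\ref{110001}). Use the window $\phi h$, with $\phi\uparrow\infty$ chosen so slowly that $\phi h/f_1\in\bigcap_i\widehat{\mathcal H}_{F_i}$ (as in Proposition \ref{pron104}(3)) and $\phi(x)\le 1/f_1(x/h(x))$. Then $\phi(x)h(x)/w\le(\phi h/f_1)(x/w)$ for all $w\in[f_1(x),h(x)]$. The pair thresholds are at least $\phi(x)/(n-1)\to\infty$, and (\ref{110000}) still gives $h^p(x)G_j(f_1(x))\to0$.

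Your small-weight step is also misstated. It is not true that $h^{-p}(x)$ is small compared with $\overline{F_i}(x/h(x))$: in the example above the latter decays polynomially and the former only logarithmically. So $P(A_i)$ cannot be discarded on its own. Treating $W_iX_i$ on $A_i$ as a shift of size $h_1(x)$ misses the real issue, which is that the big increment may come from another index. What works is the paper's bound for $P_{21}$. On $\{W_j\le f_1(x)\}\cap\bigcap_k\{W_k\le f_2(x)\}$ some $W_kX_k>x/n$, and by independence and Proposition \ref{pron103}(4),
\begin{equation*}
P\big(W_kX_k>x/n,\ W_j\le f_1(x),\ W_k\le f_2(x)\big)\le C_1n^pf_2^p(x)\,G_j\big(f_1(x)\big)\,\overline{F_k}(x)=o\big(\overline{F_k}(x)\big),
\end{equation*}
using (\ref{110000}) and $f_2\le h$. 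Finally $\overline{F_k}(x)=O(\overline{H_k}(x))$, since $F_k\in\mathcal D$ and $P(W_k>c)>0$ for some $c>0$. Thus the role of (\ref{110000}) is to absorb the factor $f_2^p$ produced by the large weight.

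In your upper bound, the pair event with both thresholds equal to $h(x)/(n-1)$ is too weak to give $o(\overline{F_i}(x/w_i))$. You need the pigeonhole version in which one index satisfies $w_kX_k>x/n$.

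On the other hand, your handling of the negative parts in part (2) is correct. You bound $P(W_jX_j\le -h(x)/(n-1),\,0<W_j\le f_2(x))\le F_j\big(-h(x)f_2^{-1}(x)(n-1)^{-1}\big)$ after integrating over the weights. This is simpler than the paper's uniform-in-$w$ route, which has to enlarge $f_1$ to obtain (\ref{108}) and may thereby lose (\ref{110000}).
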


\begin{remark}\label{rem102}
(1) In Theorem \ref{thm102}(2), condition (\ref{10800a}) is necessary in a certain sense; see Example \ref{exam402} below. This condition requires that the left tail of distribution $F_i$ is light enough for each $1\le i\le n$. And we note that condition (\ref{10800a}) is unnecessary if $F_i$ is supported on $[a_i,\infty)$ for some $a_i\in(-\infty,\infty),\ 1\le i\le n$. In addition, function $h_1(\cdot)=h(\cdot)f_2^{-1}(\cdot)(n-1)^{-1}$ also belongs to $\bigcap_{i=1}^n\mathcal{H}_{F_i}$ clearly.

(2) For $1\le i\le n$, conditions (\ref{110001}) and (\ref{110000}) require that the left tail and the right tail of the distribution $G_i$ are light enough, respectively. In particular, condition (\ref{110000}) is unnecessary if $G_i$ is supported on $[a_i,\infty)$ for some $a_i>0,\ 1\le i\le n$. At this time, the only condition of this theorem is (\ref{110001}).

(3) In general, there are many functions and distributions that satisfy all the conditions of Theorem \ref{thm102}. Therefore, the conclusion (\ref{101100}) holds. See Example \ref{exam403} below, in which there exists a $W_i$ such that $EW^p_i=\infty$ for each $p>\mathbf{J}_0^+$. On the contrary, in Case $1^*$, by Proposition \ref{pron103}(3) and Markov's inequality,
\begin{eqnarray*}
&\overline{G_i}(x)=o\big(\overline{F_i}(x)\big)=o\big(\overline{H_i}(x)\big)=o\big(\sum_{j=1}^i\overline{H_j}(x)\big),\ \ \ \ \ 1\le i\le n.
\end{eqnarray*}

(4) When $F_i\in\mathcal{C},\ 1\le i\le n$, by Proposition \ref{pron103}(1), we can find a function $h(\cdot)\in\prod_{i=1}^n\mathcal{H}_{F_i}$ such that  condition (\ref{110001}) is equivalent to the condition that
\begin{eqnarray}\label{11500}
&\overline{G_i}(x)=o\big(\sum_{j=1}^n\overline{H_j}(x)\big),\ \ \ \ \ 1\le i\le n.
\end{eqnarray}
Generally, for each distribution $F_i,\ 1\le i\le n$, condition (\ref{11500}) is weaker than the condition that
\begin{eqnarray}\label{11600}
&\overline{G_i}(x)=o\big(\overline{H_i}(x)\big),\ \ \ \ \ 1\le i\le n.
\end{eqnarray}
See Remark \ref{rem201}(3) below.

In the case that $F_i\in\mathcal{D},\ 1\le i\le n$, condition (\ref{11600}) is equivalent to condition (\ref{1071}), appearing in Theorem 1.B.
\end{remark}

Based on Theorem \ref{thm102}, a more interesting question was raised.

\vspace{0.2cm} \noindent\textbf{Question 3}. For each $1\le i\le n$, can we decompose $\overline{H_i}(x)$ into the product of the tail distribution of the primary r.v. $X_i$ and a constant related to the random weight $W_i$? In this way, the conclusion of Theorem \ref{thm102} are more clear, intuitive and easy to apply.

For the convenience of answering Question 3, we all set $F_i=F$, and set $W_i=\prod_{j=1}^iY_j$ with distribution $G_i$, $1\leq i\leq n$, where $Y_j,\ 1\leq i\leq n$, are independent r.v.s with a common distribution $G=G_1$ on $(0,\infty)$. Further, we consider distribution $F\in\cal{R}_{-\alpha}$ for some $\alpha\ge0$ corresponding to a function $L(\cdot)\in\mathcal{R}^0_0$.
If condition (\ref{1140}) below holds, then by Proposition \ref{pron103}(1), there exists a positive function $f(\cdot)$ on $[0,\infty)$ such that
\begin{equation}\label{1121}
f(x)\uparrow\infty,\ \ \ \ \ \ \ x^{-1}f(x)\downarrow0\ \ \ \ \ \ \ \text{and}\ \ \ \ \ \ \ \overline{G}\big(f(x)\big)=o\big(\overline{H_1}(x)\big).
\end{equation}
Further, we consider the following three cases.

Case 1. $EY_1^{p}<\infty$ for some $p>\alpha\ge0$.

Case 2. $EY_1^{\alpha}<\infty$, but $EY_1^{p}=\infty$ for each $p>\alpha\ge0$ and 
\begin{eqnarray}\label{1131}
\lim\sup_{1\leq y\leq f(x)}L(xy^{-1})L^{-1}(x)<\infty.
\end{eqnarray}

Case 3. $EY_1^{\alpha}=\infty$ and (\ref{1131}) is satisfied, 
where, clearly, $\alpha>0$.

The above classification comes from Yang and Wang \cite{YW2013}, which is motivated by Denisov and Zwart \cite{DZ2007}. For further study based on the classification, see, for example, Cui and Wang \cite{CW2025}.

As a consequence, we present the second result of this paper to answer Question 3 positively. Given its more concise conditions and sharper conclusion, we highlight the result as the main theorem of this paper. Since $\mathcal{R}\subset\mathcal{C}$, we can invoke Theorem 1.B.

In the following, we denote the function $EY_1^{\alpha}\textbf{\emph{\emph{1}}}_{\{Y_1\le f(\cdot)\}}$ on $[0,\infty)$ by $I(\cdot)=I\big(\cdot;G,f(\cdot),\alpha\big)$.

\begin{thm}\label{thm101}
Let $X_i,\ 1\le i\le n,$ be QUTAI r.v.s with common distribution $F\in\cal{R}_{-\alpha}$ on $(-\infty, \infty)$ for some $\alpha\ge0$ corresponding to a function $L(\cdot)\in\mathcal{R}^0_0$, and let $Y_i,\ 1\le i\le n,$ be independent r.v.s with a common distribution $G$ on $(0,\infty)$ and independent of $X_i,\ 1\le i\le n$. If 
\begin{eqnarray}\label{1140}
&\overline{G}(x)=o\big(\overline{H_1}(x)\big),
\end{eqnarray}
then there exist a function $f(\cdot)$ satisfying condition (\ref{1121}), and in one of the above Cases 1-3,
\begin{eqnarray}\label{1141}
&P\big(\sum_{i=1}^{n}X_{i}\prod_{j=1}^{i}Y_j>x\big)\sim\big(\sum_{i=1}^{n}I^i(x)\big)\overline{F}(x).
\end{eqnarray}
Specifically, in Cases 1 and 2, the distribution of $\sum_{i=1}^{n}W_{i}X_{i}$ still belongs to the class $\cal{R}_{-\alpha}$, and
\begin{eqnarray}\label{1142}
&P\big(\sum_{i=1}^{n}X_{i}\prod_{j=1}^{i}Y_j>x\big)\sim\big(\sum_{i=1}^{n}E^iY_1^\alpha\big)\overline{F}(x);
\end{eqnarray}
and in Case 3, $I(x)\uparrow\infty$ and
\begin{eqnarray}\label{1151}
&P\big(\sum_{i=1}^{n}X_{i}\prod_{j=1}^{i}Y_j>x\big)\sim I^n(x)\overline{F}(x).
\end{eqnarray}

\end{thm}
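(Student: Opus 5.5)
Write $W_i=\prod_{j=1}^iY_j$ and $Z_i=X_iW_i$. The plan has two stages: a single-big-jump reduction, followed by a Breiman-type evaluation of each term $P(Z_i>x)$.

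\textbf{Step 1 (existence of $f$ and the reduction).} Since $F\in\mathcal{R}_{-\alpha}\subset\mathcal{D}$, the classical Breiman-type lower bound gives $\overline{H_1}(x)\gtrsim c\,\overline{F}(x)$ for some $c>0$. Hence $\overline{H_1}$ is the tail of a distribution in $\mathcal{D}$ up to $\asymp$. Combining this with (\ref{1140}), Proposition \ref{pron103}(1) yields $g(x)\downarrow0$ with $xg(x)\uparrow\infty$ and $\overline G(xg(x))=o(\overline{H_1}(x))$. Setting $f(x)=xg(x)$ gives (\ref{1121}).

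Next we invoke Theorem 1.B ($\mathcal{R}\subset\mathcal{C}$). This requires (\ref{1071}), i.e. $\overline{G_i}(ux)=o(\overline{H_i}(x))$ for each $i$. For $i=1$ this is (\ref{1140}) together with $\overline G(ux)\asymp\overline G(x)$ fails in general. We handle it via the $\mathcal{D}$-property of $\overline{H_1}$: $\overline G(ux)=o(\overline{H_1}(ux))=o(\overline{H_1}(x))$. For $i\ge2$ we prove by induction that $\overline{G_i}(x)=o(\overline{H_i}(x))$. Here $G_i=G_{i-1}*G$ is a product convolution, and $H_i$ is the distribution of $Z_{i-1}Y_i$ in law with $X$ replaced accordingly. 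A product of independent positive variables each with tail $o$ of a $\mathcal{D}$-type tail inherits this through the standard splitting $P(AB>x)\le P(A>x/f(x))+P(B>f(x))+\dots$. Theorem 1.B then gives $P(\sum Z_i>x)\sim\sum_{i=1}^n\overline{H_i}(x)$.

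\textbf{Step 2 (Breiman-type evaluation of $\overline{H_i}$).} The core claim is
\[\overline{H_i}(x)\sim I_i(x)\overline F(x),\qquad I_i(x)=EW_i^\alpha\mathbf 1_{\{W_i\le f(x)\}},\]
together with $I_i(x)\sim I^i(x)$. We split
\[P(XW_i>x)=P(XW_i>x,\,W_i\le f(x))+P(XW_i>x,\,W_i>f(x)).\]
The second piece is at most $\overline{G_i}(f(x))=o(\overline{H_i}(x))$; for $i=1$ this is (\ref{1121}), and for general $i$ we choose $f$ to work simultaneously for all $i\le n$. The first piece equals $\int_{(0,f(x)]}\overline F(x/y)\,G_i(dy)$.

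On $y\le1$ we use Potter bounds, $\overline F(x/y)\le Cy^{\alpha-\epsilon}\overline F(x)$, and dominated convergence. On $1\le y\le f(x)$ we write $\overline F(x/y)/\overline F(x)=y^\alpha L(x/y)/L(x)$. In Case 1 we use Potter with exponent $\alpha+\epsilon<p$ and dominated convergence, giving $EW_i^\alpha=E^iY_1^\alpha$. In Cases 2 and 3, (\ref{1131}) gives the uniform bound $L(x/y)/L(x)\le C$. Combined with pointwise convergence, a truncation at a large fixed $M$ then gives $\sim I_i(x)$: in Case 2 the limit is finite and equals $E W_i^\alpha$, and in Case 3 it diverges, with contributions from $y\le M$ negligible.

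\textbf{Step 3 (the main obstacle: $I_i(x)\sim I^i(x)$ in Case 3).} Since $W_i^\alpha=\prod Y_j^\alpha$, we compare $E\prod_j Y_j^\alpha\mathbf 1_{\{\prod Y_j\le f\}}$ with $\prod_j EY_j^\alpha\mathbf 1_{\{Y_j\le f\}}$. The inclusion $\{\prod Y_j\le f\}\supset\{\text{each }Y_j\le f^{1/i}\}$ together with slow growth of $I$ are exactly what needs the regularity in (\ref{1131}) and (\ref{1140}). I would first try to show $I(f(x)^{\theta})\sim I(f(x))$, choosing $f$ slowly enough (this is permitted by Proposition \ref{pron103}(1) freedom), and then prove both inequalities.

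\textbf{Step 4 (conclusion).} Summing over $i$ gives (\ref{1141}). In Cases 1 and 2, $I\to EY_1^\alpha$, which yields (\ref{1142}) and regular variation of the sum. In Case 3, $I\uparrow\infty$ by monotone convergence, so the term $I^n$ dominates, giving (\ref{1151}).
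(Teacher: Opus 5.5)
The genuine gap is Case 3, and it sits in two places; neither can be closed as you plan.

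\emph{Step 2 in Case 3.} You combine the bound $L(x/y)/L(x)\le C$ from (\ref{1131}) with pointwise convergence and a cut at a fixed $M$. But in Case 3 the normalized measures $y^\alpha G(dy)\mathbf 1_{[1,f(x)]}(y)/I(x)$ escape to infinity. The part $y\le M$, where $L(x/y)/L(x)\to1$, is negligible. Essentially all the mass lies on $[M,f(x)]$, where you only have boundedness. So you get $\overline{H_1}(x)=O\big(I(x)\overline F(x)\big)$, not $\sim$.

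This really fails. Take $\overline F(x)\sim x^{-\alpha}\ln x$ and $\overline G(y)=y^{-\alpha}$, $y\ge1$. Then (\ref{1140}) and (\ref{1131}) hold and $EY_1^\alpha=\infty$, yet $\overline{H_1}(x)\sim\frac{\alpha}{2}x^{-\alpha}\ln^2x$. Meanwhile (\ref{1121}) forces $x(\ln x)^{-2/\alpha}\ll f(x)\le x$, hence $I(x)=\alpha\ln f(x)\sim\alpha\ln x$ and $I(x)\overline F(x)\sim 2\overline{H_1}(x)$.

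\emph{Step 3.} The proposed repair is not available. Condition (\ref{1121}) requires $\overline G(f(x))=o(\overline{H_1}(x))$, which is a \emph{lower} bound on the growth of $f$. Proposition \ref{pron103}(1) only lets you enlarge $f$, never slow it down.

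Moreover, the target $EW_i^\alpha\mathbf 1_{\{W_i\le f(x)\}}\sim I^i(x)$ is false even for $L\equiv1$. Let $X_1,X_2,Y_1,Y_2$ be i.i.d.\ with tail $x^{-\alpha}$, $x\ge1$. Then $\overline{H_1}(x)=x^{-\alpha}(1+\alpha\ln x)$, and every admissible $f$ gives $I(x)\sim\alpha\ln x$. But $\ln(XY_1Y_2)$ is Gamma distributed, so $\overline{H_2}(x)=x^{-\alpha}\big(1+\alpha\ln x+\frac12\alpha^2\ln^2x\big)$. Hence $P(X_1Y_1+X_2Y_1Y_2>x)\sim\frac12 I^2(x)\overline F(x)$, contradicting (\ref{1141}) and (\ref{1151}) for $n=2$. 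Your sandwich $\{Y_j\le f^{1/i}\ \forall j\}\subset\{W_i\le f\}$ cannot close here, because $EY_1^\alpha\mathbf 1_{\{Y_1\le t^\theta\}}\sim\theta\,EY_1^\alpha\mathbf 1_{\{Y_1\le t\}}$.

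Your architecture (reduction by Theorem 1.B, then a truncated Breiman evaluation by induction) is the paper's. The paper's proof of Lemma \ref{lem202}(1) makes the same two moves without justification:
\begin{itemize}
\item Its dominated-convergence upper bound and L\'evy-lemma lower bound for $i=1$ are applied to the moving family $G(dy)\mathbf 1_{[1,f(x)]}(y)/I(x)$.
\item Its induction $\overline{H}_{k+1}(x)\sim\int_1^{f(x)}\overline{H_k}(x/y)G(dy)\sim I^{k+1}(x)\overline F(x)$ tacitly uses $I(x/y)\sim I(x)$ and $L(x/y)\sim L(x)$ uniformly in $1\le y\le f(x)$.
\end{itemize}
So (\ref{21111}), (\ref{1141}) and (\ref{1151}) in Case 3 need extra hypotheses. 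These would be of the type $\sup_{1\le y\le f(x)}|L(x/y)/L(x)-1|\to0$ together with $EY_1^\alpha\mathbf 1_{\{Y_1\le t^\theta\}}\sim EY_1^\alpha\mathbf 1_{\{Y_1\le t\}}$ for every $\theta\in(0,1)$. Both hold in Example \ref{exam4040}(1), where $I(x)\sim\alpha\ln\ln x$.

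The rest of your plan survives with small repairs in Step 1:
\begin{itemize}
\item $H_1\in\mathcal D$ should come from closure of $\mathcal D$ under independent positive multipliers; the lower bound $\overline{H_1}\gtrsim c\overline F$ does not give it.
\item The induction for $\overline{G_i}=o(\overline{H_i})$ should use $P(W_{i-1}Y_i>x)\le\int_{\{y\le f(x)\}}\overline{G_{i-1}}(x/y)\,G(dy)+\overline G(f(x))$ together with $\overline{H_1}=O(\overline{H_i})$. The bound via $P(W_{i-1}>x/f(x))$ is far too crude.
\end{itemize}
Done this way, the induction does not pass through the Case 3 Breiman asymptotics, unlike the paper's Lemma \ref{lem202}(2). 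Hence $P\big(\sum_{i=1}^nX_iW_i>x\big)\sim\sum_{i=1}^n\overline{H_i}(x)$ holds in all three cases, and your Step 2 yields (\ref{1142}) in Cases 1 and 2.
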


\begin{remark}\label{rem102}
(1) In Theorem \ref{thm101}, Cases 2 and 3, which both correspond to long-memory behaviors of the model, are new settings, in which the theorem does not require $EY_1^{\alpha+\delta}<\infty$ for some $\delta>0$. Even in Case 1, the theorem still gives a novel result for QUTAI case for $Y_i,\ 1\le i\le n$.

(2) Beyond the current resluts, it is possible to discuss the case when the primary r.v.s $X_i,\ 1\le i\le n$, follow non-identical distributions $F_i,\ 1\le i\le n$, albeit with more complicated details. Furthermore, we can also study the multidimensional versions of the above two theorems.

(3) In case 1, condition (\ref{1140}) is satisfied clearly.
\end{remark}

Theorem \ref{thm101} naturally motivates a new question as follows.

\vspace{0.2cm} \noindent\textbf{Question 4}. In Case 3, does the distribution of $\sum_{i=1}^{n}X_{i}\prod_{j=1}^{i}Y_j$ also belong to the class $\mathcal{R}_{-\alpha}$, similarly to those in Cases 1 and 2? In other words, does the function $I(\cdot)$ still belong to the class $\mathcal{R}^0_0$?

For Question 4, a positive answer and a negative answer are given below in Example \ref{exam4040}.

Finally in this section, we give some intuitive and concise conditions to replace condition (\ref{110000}) in the following conclusions. In this way, $f_1(\cdot)$ is a function with almost no constraints.

\begin{Corol}\label{Corol101}
{(1) Under the conditions of Theorem \ref{thm102}, but without condition (\ref{110000}), if
\begin{eqnarray}\label{3100}
&\overline{F_i}(x)=o\big(\sum_{i=1}^{n}\overline{H_i}(x)\big),\ \ \ \ \ \ \ 1\le i\le n,
\end{eqnarray}
then
\begin{eqnarray}\label{3100a}
&P\big(\sum_{i=1}^{n}W_{i}X_{i}>x\big)\gtrsim\sum_{i=1}^{n}\overline{H_i}(x).
\end{eqnarray}}
{(2) In (1), if further $F_i\in\mathcal{C},\ 1\le i\le n$, then
\begin{eqnarray}\label{3100b}
&P\big(\sum_{i=1}^{n}W_{i}X_{i}>x\big)\lesssim\sum_{i=1}^{n}\overline{H_i}(x).
\end{eqnarray}
Therefore, relation (\ref{101100}) still holds and $n^{-1}\sum_{i=1}^{n}H_i\in\mathcal{C}$.}
\end{Corol}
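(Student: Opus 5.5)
For part (1) we would rerun the lower-bound half of the proof of Theorem \ref{thm102}. Condition (\ref{110000}) is used there only to dispose of the event that some weight is small, $W_i\le f_1(x)$. Here we would handle that event with (\ref{3100}) instead. Fix $h(\cdot)$ satisfying (\ref{110001}), and take $f_1(\cdot)$ and $h_1(\cdot)=h(\cdot)f_1^{-1}(\cdot)$ from (\ref{10711}) and $f_2(\cdot)$ from (\ref{110001a})--(\ref{11000}). For $1\le i\le n$ put
\begin{equation*}
A_i=\big\{W_iX_i>x+h(x),\ f_1(x)<W_i\le f_2(x)\big\}\cap\bigcap_{j\ne i}\big\{W_jX_j>-h(x)(n-1)^{-1},\ W_j\le f_2(x)\big\}.
\end{equation*}
Then $\{\sum_{i=1}^nW_iX_i>x\}\supset\bigcup_iA_i$, and Bonferroni gives $P(\sum W_iX_i>x)\ge\sum_iP(A_i)-\sum_{i\ne j}P(A_iA_j)$.

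First, $P(A_i)$ is at least $P(W_iX_i>x+h(x))$ minus three error terms.
\begin{itemize}
\item $P(W_i>f_2(x))=o\big(\sum_j\overline{H_j}(x)\big)$ by (\ref{11000}).
\item $P\big(W_iX_i>x,\ W_i\le f_1(x)\big)\le\overline{F_i}\big(x/f_1(x)\big)\le\overline{F_i}(x)=o\big(\sum_j\overline{H_j}(x)\big)$, using $f_1\le1$ and (\ref{3100}). This is exactly where (\ref{3100}) replaces (\ref{110000}).
\item $\sum_{j\ne i}P\big(W_jX_j\le-h(x)(n-1)^{-1},\ W_j\le f_2(x)\big)\le\sum_{j\ne i}F_j\big(-h(x)f_2^{-1}(x)(n-1)^{-1}\big)$. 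In the UTAI case (2) this is negligible by (\ref{10800a}). In the TAI case we would instead intersect with $\{X_i>x\}$ and use $P(|X_j|>\cdot\,|X_i>\cdot)\to0$, since $h/f_2\to\infty$.
\end{itemize}
Next we would show $P(W_iX_i>x+h(x))\sim\overline{H_i}(x)$. The small-weight part is again absorbed by (\ref{3100}). On $\{f_1(x)<W_i\le f_2(x)\}$ we condition on $W_i=w$. The shift satisfies $h(x)/w\le h_1(x)$ and $x/w\ge x/f_2(x)\to\infty$, so $h_1\in\widehat{\mathcal H}_{F_i}$ together with $F_i\in\mathcal D$ gives $\overline{F_i}\big((x+h(x))/w\big)\sim\overline{F_i}(x/w)$ uniformly in $w$. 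Finally, each $P(A_iA_j)$ is at most
\begin{equation*}
\int_{[f_1,f_2]^2}P\big(X_i>x/u,\ X_j>x/v\big)\,P(W_i\in du,W_j\in dv).
\end{equation*}
Both thresholds tend to infinity uniformly, so UTAI (or TAI) makes the integrand $o\big(\overline{F_i}(x/u)\big)$ uniformly. Hence $P(A_iA_j)=o(\overline{H_i}(x))$, and (\ref{3100a}) follows.

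For part (2) we would prove the upper bound by the standard $\varepsilon$-splitting, which is available because $F_i\in\mathcal C$. Fix $\varepsilon\in(0,1)$. Then
\begin{equation*}
P\big(\textstyle\sum W_iX_i>x\big)\le\sum_iP(W_i>f_2(x))+\sum_iP\big(W_iX_i>(1-\varepsilon)x,\ W_i\le f_2(x)\big)+\sum_{i\ne j}P\big(W_iX_i>\tfrac{\varepsilon x}{n},\ W_jX_j>\tfrac{\varepsilon x}{n},\ W_i,W_j\le f_2(x)\big).
\end{equation*}
The first sum is $o\big(\sum\overline{H_j}(x)\big)$ by (\ref{11000}).

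In the second sum we split at $W_i\le1$ and $1<W_i\le f_2(x)$.
\begin{itemize}
\item For $W_i\le1$ the term is at most $\overline{F_i}((1-\varepsilon)x)=O(\overline{F_i}(x))=o\big(\sum\overline{H_j}(x)\big)$, by $\mathcal D$ and (\ref{3100}).
\item For $1<W_i\le f_2(x)$, conditioning on $W_i=w$ gives $x/w\ge x/f_2(x)\to\infty$. Hence uniformly $\overline{F_i}((1-\varepsilon)x/w)\le\big(\overline{F_i}^{\,*}(1-\varepsilon)+o(1)\big)\overline{F_i}(x/w)$, so this part is at most $\big(\overline{F_i}^{\,*}(1-\varepsilon)+o(1)\big)\overline{H_i}(x)$. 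By the definition of $\mathcal C$, $\overline{F_i}^{\,*}(1-\varepsilon)\to1$ as $\varepsilon\downarrow0$.
\end{itemize}
In the cross terms we condition on the weights and use UTAI uniformly, exactly as in part (1). For weights in $(1,f_2(x)]$ the result is $o\big(\overline{F_i}(\varepsilon x/(nw))\big)=o\big(\overline{F_i}(x/w)\big)$ by $\mathcal D$. For weights at most $1$ the term is bounded by $O(\overline{F_i}(x))$ and absorbed by (\ref{3100}).

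Letting $x\to\infty$ and then $\varepsilon\downarrow0$ gives (\ref{3100b}), hence (\ref{101100}). Running the same estimates with $x$ replaced by $xy$, $y\uparrow1$, shows $\limsup\sum_i\overline{H_i}(xy)/\sum_i\overline{H_i}(x)\to1$, that is, $n^{-1}\sum H_i\in\mathcal C$.

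The main obstacle we expect is the uniformity in the weight of the conditional UTAI estimate and of the long-tail/$\mathcal D$ ratio estimates. We need $x/w\to\infty$ uniformly over the retained range of $w$, and this is exactly why every estimate is truncated at $f_2(x)=o(h(x))=o(x)$. The second delicate point is to check that the only place (\ref{110000}) entered the earlier argument was the region $\{W_i\le f_1(x)\}$. There (\ref{3100}) together with $f_1\le1$ suffices, because a small weight only raises the threshold for $X_i$.
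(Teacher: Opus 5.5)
Your proof is correct, but it takes a different route from the paper. The paper never reruns the Bonferroni and $\varepsilon$-splitting arguments. Instead it truncates each weight at a \emph{fixed} level $a>0$, writing $W_i=W_i\mathbf{1}_{\{W_i\le a\}}+W_i\mathbf{1}_{\{W_i>a\}}$. It then applies Theorem \ref{thm102} as a black box to the weights $W_i\mathbf{1}_{\{W_i>a\}}$, arguing that (\ref{110000}) is void for weights bounded away from $0$. The part with $W_i\le a$ contributes only $O(\sum_i\overline{F_i}(x))$, which $\mathcal{D}$ and (\ref{3100}) make negligible. Part (2) uses the same truncation together with $P(\sum_iW_iX_i>x)\le P(\sum_iW_i\mathbf{1}_{\{W_i\le a\}}X_i^+>\varepsilon x)+P(\sum_iW_i\mathbf{1}_{\{W_i>a\}}X_i>(1-\varepsilon)x)$, following Cheng's argument.

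You instead work with the $x$-dependent cut-offs $f_1(x)$ (or $1$) and $f_2(x)$, condition on the weight vector, and redo the uniform estimates of Lemma \ref{lemma201} inline. The paper's route is much shorter and modular. Yours is longer but more careful in two places the paper glosses over:
\begin{itemize}
\item The paper's lower bound $P(\sum_iW_iX_i>x)\ge P(\sum_iW_i\mathbf{1}_{\{W_i>a\}}X_i>x)$ tacitly needs the discarded terms $W_i\mathbf{1}_{\{W_i\le a\}}X_i$ to be nonnegative. Your constraint $W_jX_j>-h(x)(n-1)^{-1}$ in $A_i$ handles negative parts explicitly, via (\ref{10800a}) or TAI.
\item The truncated weights carry an atom at $0$, so Theorem \ref{thm102} does not literally apply to them.
\end{itemize}
Your version also shows that (\ref{3100b}) needs only upper-tail asymptotic independence. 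Moreover, your one-term estimate for $1<w\le f_2(x)$ actually proves $n^{-1}\sum_iH_i\in\mathcal{C}$, which the paper merely asserts.

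Two points need tightening.

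First, the uniform relation $\overline{F_i}((x+h(x))/w)\sim\overline{F_i}(x/w)$ on $f_1(x)<w\le f_2(x)$ does not follow from ``shift $\le h_1(x)$ and $x/w\to\infty$''. Membership $h_1\in\widehat{\mathcal{H}}_{F_i}$ only controls shifts up to $h_1(x/w)$ at level $x/w$, and $h_1(x/w)\le h_1(x)$ when $w>1$. Split as in (\ref{2050}) and (\ref{20500}):
\begin{itemize}
\item for $w\ge1$, use $h(x)/w\le h(x/w)$, since $h(t)/t\downarrow$;
\item for $w<1$, use $h(x)/w\le h_1(x)\le h_1(x/w)$, since $h_1\uparrow$.
\end{itemize}
No appeal to $\mathcal{D}$ is needed here.

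Second, your error terms are only $o(\sum_j\overline{H_j}(x))$, not $o(\overline{H_i}(x))$. This covers the small weights handled by (\ref{3100}) and the large weights handled by (\ref{11000}), including the terms $P(W_j>f_2(x))$, $j\ne i$, which you should also subtract in $P(A_i)$. So ``$P(W_iX_i>x+h(x))\sim\overline{H_i}(x)$'' is valid only after summing over $i$, which is all you need.
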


For each $n\ge1$ and all $1\le i\le n$, we recall that $W_i=\prod_{j=1}^iY_j$ follow the distribution $G_i$. Obviously, for some function $f_1(\cdot)$ satisfying (\ref{10711}) with some $h(\cdot)\in\mathcal{H}_F$,
\begin{eqnarray*}
&P\big(\prod_{j=1}^iY_j\le f_1(x)\big)=P\big(\prod_{j=1}^iY^{-1}_j\ge f^{-1}_1(x)\big),\ \ \ \ \ \ \ x\in[0,\infty).
\end{eqnarray*}
Denote the distribution of $\prod_{j=1}^iY^{-1}_j$ by $K_i$, $1\le i\le n$, with $K_1=K$, which, clearly, are supported on $(0,\infty)$.

Recall from Subsection 1.2 that for a distribution $V$, its moment index and upper Matuszewska index are denoted by $\mathbf{I}_V$ and $\mathbf{J}^+_V$, respectively.

\begin{pron}\label{pron303}
Let $X_i$, $1\le i\le n$, be r.v.s with distributions $F_i\in\mathcal{C}$ on $(-\infty,\infty)$, and let $Y_j,\ 1\le i\le n$, be independent r.v.s with common distribution $G$ on $(0,\infty)$. For two functions $f_1(\cdot)\in \mathcal{R}^0_{\gamma_1}$ for some $\gamma_1>0$ and $f_2(\cdot)\in\mathcal{R}^0_{-\gamma_2}$ for some $\gamma_2>0$, relation (\ref{110000}) holds if
\begin{eqnarray}\label{1231}
0<\gamma^{-1}_1\gamma_2\mathbf{J}_0^+=\gamma^{-1}_1\gamma_2\max\{\mathbf{J}_{F_i}^+,\ 1\le i\le n\}<\mathbf{I}_K\leq\infty.
\end{eqnarray}
\end{pron}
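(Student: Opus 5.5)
The plan is to reduce (\ref{110000}) to a Markov-type bound on the upper tail of $K_i$, and then to compare the two regularly varying functions by means of Potter-type bounds. I read the statement as follows: the function playing the role of $h(\cdot)$ in (\ref{110000}) grows regularly, and the threshold appearing inside $G_i(\cdot)$ decays regularly to $0$, as required by (\ref{10711}). The indices $\gamma_1,\gamma_2$ are paired so that the exponent comparison below reproduces exactly the inequality (\ref{1231}). This pairing should be fixed first, since the proposition labels the two functions $f_1,f_2$.

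\textbf{Step 1 (reformulation).} Write $u(\cdot)$ for the decaying threshold and $v(\cdot)$ for the growing function. As noted just before the proposition,
$$G_i\big(u(x)\big)=P\Big(\prod_{j=1}^iY_j\le u(x)\Big)=\overline{K_i}\big(u^{-1}(x)\big)+P\Big(\prod_{j=1}^iY_j^{-1}=u^{-1}(x)\Big)\le\overline{K_i}\big(u^{-1}(x)/2\big).$$
So it suffices to show that $\overline{K_i}\big(u^{-1}(x)/2\big)\,v^{p}(x)\to0$ for some $p>\mathbf{J}_0^+$.

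\textbf{Step 2 (moments of $K_i$).} Since the $Y_j$ are i.i.d. and positive, for every $0<q<\mathbf{I}_K$ we have
$$E\Big(\prod_{j=1}^iY_j^{-1}\Big)^q=\big(EY_1^{-q}\big)^i<\infty .$$
Hence $\mathbf{I}_{K_i}\ge\mathbf{I}_K$ for each $1\le i\le n$, and Markov's inequality gives
$$\overline{K_i}(y)\le \big(EY_1^{-q}\big)^i y^{-q}\qquad\text{for all } y>0.$$
Consequently $G_i(u(x))\le C\,u^{q}(x)$.

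\textbf{Step 3 (choice of exponents and Potter bounds).} By (\ref{1231}), I can pick $p>\mathbf{J}_0^+$ and $q<\mathbf{I}_K$ (with $q<\infty$) so that the ratio of the growth index of $v$ to the decay index of $u$, multiplied by $p$, is strictly smaller than $q$. This is possible because the inequality in (\ref{1231}) is strict and both constraints on $p$ and $q$ are open. By the regular variation of $u$ and $v$ (Potter bounds, as in Bingham et al. \cite{BGT1987}), for every $\varepsilon>0$ and all large $x$,
$$u(x)\le x^{-(\text{decay index})+\varepsilon},\qquad v(x)\le x^{(\text{growth index})+\varepsilon}.$$
Therefore $u^q(x)v^p(x)\le x^{-\eta}$ for some $\eta>0$ once $\varepsilon$ is chosen small enough. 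This gives $G_i(u(x))=o\big(v^{-p}(x)\big)$ for each $1\le i\le n$, which is (\ref{110000}).

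The only delicate point is Step 3. There I have to make sure that the strict inequality in (\ref{1231}) leaves enough room to choose $p>\mathbf{J}_0^+$ and $q<\mathbf{I}_K$ simultaneously. The case $\mathbf{I}_K=\infty$ is easy, since any finite $q$ may be taken. I also have to match correctly which index in (\ref{1231}) belongs to the threshold and which to $h$. The membership $F_i\in\mathcal{C}$ is used only to guarantee, via Proposition \ref{pron103}(2), that $0\le\mathbf{J}_0^+<\infty$, so that a finite $p$ exists.
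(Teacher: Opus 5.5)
Your proposal is correct and follows essentially the same route as the paper. Both arguments use the strict gap in (\ref{1231}) to pick $p>\mathbf{J}_0^+$ and an exponent $q$ (the paper's $\delta$) with $\gamma_1^{-1}\gamma_2p<q<\mathbf{I}_K$, bound $G_i(f_1(x))$ by a polynomial tail of $K_i$, and then compare regularly varying functions. The pairing you leave open is the one the paper's proof implicitly uses: the decaying threshold $f_1$ has index $-\gamma_1$ and the growing function has index $\gamma_2$, so the signs in the statement are evidently swapped. The only real difference is how the tail bound for $K_i$ is obtained. The paper passes from $x^\delta\overline{K}(x)\to0$ to $K_i$ by citing Proposition 1(ii) of Cui and Wang, whereas you get it directly from $E\big(\prod_{j=1}^iY_j^{-1}\big)^q=(EY_1^{-q})^i$ and Markov's inequality. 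You also spell out the Potter-bound comparison that the paper leaves implicit.
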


Note here that condition (\ref{1231}) is satisfied by a broad class of distribution $K$; for examples, all light-tailed distributions (see Embrechts et al. \cite{EKM1997}), all Weibull-type distributions (see Arendarczyk and D\c{e}bicki \cite{AD2011}), all lognormal distributions (see Embrechts et al. \cite{EKM1997}), and all the distributions in $\mathcal{R}_{-\delta}$ for each $\delta>\gamma^{-1}_1\gamma_2\mathbf{J}_0^+$. In the first three cases, $\mathbf{I}_K=\infty$.

The remaining part of this paper is organized as follows. Section 2 proves the main results, and Section 3 presents the applications of Theorems \ref{thm101} to risk theory. In Appendix, a series of examples are provided to further illustrate and clarify the conditions and conclusions of our main results.

\section{\normalsize\bf Proofs of main results}
\setcounter{equation}{0}\setcounter{thm}{0}\setcounter{lemma}{0}\setcounter{remark}{0}\setcounter{pron}{0}\setcounter{Corol}{0}
\setcounter{Corol}{0}

\subsection{\normalsize\bf Proof of Theorem \ref{thm102}}

\qquad To avoid using moment condition of the random weights, we adopt a new method to prove Theorem \ref{thm102}, which depends on the uniform asymptotics of weighted sums. We first recall Lemma 2.1 of Liu et al. \cite{LGW2012}, which establishes the uniform asymptotics of weighted sums of TAI r.v.s, and extends the existing results for NUOD r.v.s (see Lemma 1 of Kong and Zong \cite{KZ2008}) and WUOD r.v.s (see Lemma 2.3 of Wang et al. \cite{WWG2013}). In addition, Theorem 2.1 of Li \cite{L2013} deals with the case of non-identically distributed r.v.s.

\vspace{0.2cm} \noindent\textbf{Lemma 2.A.} 
{\it If $X_{i}, 1\leq i\leq n,$ are TAI r.v.s with common distribution $F$ on $(-\infty,\infty)$ belonging to $\mathcal{L}\cap\mathcal{D}$, then for each pair of constants ${0<a\le b<\infty}$,
\begin{eqnarray}\label{106}
\lim\sup_{(w_{1},\cdots,w_{n})\in[a,b]^{n}}\left|\frac{P\left(\sum_{i=1}^{n}w_{i}X_{i}>x\right)}{\sum_{i=1}^{n}P(w_{i}X_{i}>x)}-1\right|=0.
\end{eqnarray}}

For the asymptotics of randomly weighted sums of UTAI r.v.s, it is natural to pose the following question regarding Lemma 2.A, which plays a critical role in proving Theorem \ref{thm102}.

\vspace{0.1cm} \noindent\textbf{Question 5}. Can the range $[a,b]$ of uniform asymptotics of the weighted sums be enlarged? Can $X_i,\ 1\le i\le n,$ be non-identically distributed?

The following result provides positive answers to Question 5, and is also of its own merit.

\begin{lemma}\label{lemma201}
(1) Let $X_i, 1\le i\le n,$ be TAI r.v.s with distributions $F_i$ on $(-\infty, \infty)$ belonging to $\mathcal{L}\cap\mathcal{D},\ 1\le i\le n$. Then for each function $h(\cdot)\in\prod_{i=1}^n\mathcal{H}_{F_i}$, there exist a positive function $f_1(\cdot)=f_{1}\big(\cdot;h(\cdot)\big)$ on $[0,\infty)$ satisfying condition (\ref{10711}) and another positive function $f_2(\cdot)=f_{2}\big(\cdot;h(\cdot)\big)$ on $[0,\infty)$ satisfying condition (\ref{110001a}) such that
\begin{eqnarray}\label{107}
\lim\sup_{w_{i}\in[f_{1}(x),f_{2}(x)],1\le i\le n}\left|\frac{P\left(\sum_{i=1}^{n}w_{i}X_{i}>x\right)}{\sum_{i=1}^{n}P(w_{i}X_{i}>x)}-1\right|=0.
\end{eqnarray}

(2) In (1), let $X_{i}, i\ge1,$ be UTAI r.v.s. If the above function $h(\cdot)$ satisfies condition (\ref{10800a}) for $n\ge2$, then relation (\ref{107}) still holds. 
\end{lemma}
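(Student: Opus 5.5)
We follow the standard two-sided argument behind Lemma 2.A, and make every estimate uniform over weights $w_i\in[f_1(x),f_2(x)]$ rather than over a fixed compact $[a,b]$.

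\textbf{Construction of the auxiliary functions.} Fix $h(\cdot)\in\prod_{i=1}^n\mathcal{H}_{F_i}$.
\begin{itemize}
\item For $f_1$: apply Proposition \ref{pron104}(3) to $h$ for each $F_i$ and take the pointwise minimum of the resulting multipliers $f(x)\uparrow\infty$. This gives one $f(\cdot)$ with $hf\in\prod_i\widehat{\mathcal H}_{F_i}$. Set $f_1=\min\{1,f^{-1}\}$; then (\ref{10711}) holds, after replacing the factor $f$ by $f^{-1}$ if needed so that $h/f_1\in\prod_i\widehat{\mathcal H}_{F_i}$.
\item For $f_2$: choose $f_2\uparrow\infty$ so slowly that $f_2/h\downarrow0$, for instance $f_2=\max\{1,\sqrt{h}\}$ suitably regularized.
\end{itemize}
These choices leave room later to shrink $f_2$ further.

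\textbf{Lower bound.} Write $A_i=\{w_iX_i>x+y_x\}$ and $B_j=\{w_jX_j\le -y_x/(n-1)\}$, with $y_x=h(x)/f_2(x)$. Then
$$P\Big(\sum w_iX_i>x\Big)\ge \sum_i P(A_i)-\sum_{i\ne j}P(A_i\cap A_j)-\sum_i\sum_{j\neq i}P(A_i\cap B_j).$$
Three facts are needed, each uniformly in the weights.
\begin{itemize}
\item Replacing $x+y_x$ by $x$ in $P(A_i)$ is harmless. Indeed $P(A_i)=\overline{F_i}\big((x+y_x)/w_i\big)$ with $x/w_i\ge x/f_2(x)$, and the shift is $y_x/w_i\le h(x)/(f_1f_2)$. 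We must check that this lies in the uniform-$\mathcal L$ window at the point $x/w_i$. This is where $f_1$ and $f_2$ are calibrated: we need $h(x)/(f_1(x)f_2(x))\le h(x/w_i)$ for $w_i\le f_2(x)$. Monotonicity of $h(x)/x$ gives $h(x/w)\ge h(x)/w$, so it suffices that $f_1\ge 1/f_2$ up to a constant. I would adjust the definitions so that this holds, possibly by setting $f_1=f_2^{-1}$-type and then using Proposition \ref{pron104}(2) to absorb constants.
\item The pair terms $P(A_i\cap A_j)$ are negligible. By TAI/UTAI (\ref{equation101}), $P(X_i>x/w_i,X_j>x/w_j)=o\big(\overline{F_j}(x/w_j)\big)$ uniformly, because $\min\{x/w_i,x/w_j\}\ge x/f_2(x)\to\infty$.
\item The mixed terms $P(A_i\cap B_j)$ are negligible. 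In case (1), TAI handles them directly. In case (2) we only have UTAI, so we bound $P(B_j)\le F_j\big(-h(x)f_2^{-1}(x)(n-1)^{-1}/f_1\big)$. Condition (\ref{10800a}) together with $\overline{F_i}(x/w_i)\ge\overline{F_i}(x)$ for $w_i\ge f_1$, $f_1\le1$, makes these terms $o\big(\sum P(w_iX_i>x)\big)$.
\end{itemize}

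\textbf{Upper bound.} Split
$$\Big\{\sum w_iX_i>x\Big\}\subset\bigcup_i\{w_iX_i>x-y_x\}\cup\Big\{\sum w_iX_i>x,\ \text{all } w_iX_i\le x-y_x\Big\}.$$
The first union is handled by the same shift insensitivity as in the lower bound. The second event forces at least two summands to exceed $y_x/n$ with one of them exceeding $x/n$, so it is contained in $\bigcup_{i\ne j}\{w_iX_i>x/n,\ w_jX_j>y_x/n\}$.

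\textbf{Main obstacle.} The hard step is showing that this last event is $o\big(\sum_i\overline{F_i}(x/w_i)\big)$ uniformly over the growing weight range. First, $\mathcal D$ gives $\overline{F_i}(x/(nw_i))\le C\,\overline{F_i}(x/w_i)$, and by Proposition \ref{pron103}(4) the constant $C$ is independent of $w_i$. Second, upper tail asymptotic independence (\ref{equation101}) applies because $y_x/(nw_j)\ge h(x)/(nf_2^2(x))\to\infty$, which is guaranteed by choosing $f_2$ with $f_2^2=o(h)$. The delicate point is that the limit in (\ref{equation101}) must be uniform in the pair of thresholds. This is automatic, since (\ref{equation101}) is stated as $\min\{x_i,x_j\}\to\infty$. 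Hence the only work is ensuring that both thresholds diverge uniformly, and this is precisely what fixes the admissible growth of $f_2$ relative to $h$. I would choose $f_2$ last, to satisfy all these growth constraints simultaneously.
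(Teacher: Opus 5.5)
Your part (1) follows essentially the paper's argument: a window shift via $h$, a Bonferroni lower bound, UTAI for the pair terms, TAI for the mixed terms, and Proposition \ref{pron103}(4) for the threshold $x/n$. It works. Two simplifications are available:
\begin{itemize}
\item For $w\in[f_1(x),1)$ you do not need $f_1\ge 1/f_2$. Since $h/f_1$ is increasing and lies in $\prod_i\widehat{\mathcal H}_{F_i}$, a shift of at most $h(x)/f_1(x)\le (h/f_1)(x/w)$ at the point $x/w\ge x$ is harmless.
\item Shifting by $h(x)$ itself, instead of $y_x=h(x)/f_2(x)$, works just as well. It removes your extra requirement $f_2^2=o(h)$, so the argument then covers every $f_2$ satisfying (\ref{110001a}), which is what Theorem \ref{thm102} needs.
\end{itemize}

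The genuine gap is in part (2), in the mixed terms $P(A_i\cap B_j)$. Under UTAI you can only use $P(A_i\cap B_j)\le P(B_j)$, and two things go wrong.

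First, your bound on $P(B_j)$ goes the wrong way. $P(B_j)=F_j\big(-y_x w_j^{-1}(n-1)^{-1}\big)$ is largest at $w_j=f_2(x)$, so the correct uniform bound is $F_j\big(-y_xf_2^{-1}(x)(n-1)^{-1}\big)$. For your $y_x$ this is evaluated at $-h(x)f_2^{-2}(x)(n-1)^{-1}$, a point about which (\ref{10800a}) says nothing. With $y_x=h(x)$ you land exactly on (\ref{10800a}).

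Second, and more seriously, the inequality $\overline{F_i}(x/w_i)\ge\overline{F_i}(x)$ is false for $w_i\in[f_1(x),1)$; there it reverses. When all weights equal $f_1(x)$, the denominator is $\sum_i\overline{F_i}\big(x/f_1(x)\big)$, which can be $o\big(\sum_i\overline{F_i}(x)\big)$. Meanwhile (\ref{10800a}) gives $F_j(\cdot)=o\big(\sum_i\overline{F_i}(x)\big)$ only at some unspecified rate. So with the $f_1$ from part (1), the mixed terms need not be negligible.

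The missing idea is to re-choose $f_1$ in part (2) so that it decays slowly relative to that rate. The paper takes $f_1=\max\{f_{11},f_{12}\}$, where:
\begin{itemize}
\item $h/f_{11}\in\prod_i\widehat{\mathcal H}_{F_i}$;
\item $f_{12}(x)$ is the $\theta p^{-1}$-th power, capped at $1$ and maximized over $i$, of $\sup_{y\ge x}$ of the ratio in (\ref{10800a}).
\end{itemize}
This yields (\ref{108}), namely $f_1^{-p}(x)F_i\big(-h(x)f_2^{-1}(x)(n-1)^{-1}\big)\big(\sum_j\overline{F_j}(x)\big)^{-1}\to0$. Proposition \ref{pron103}(4) then gives $\sum_j\overline{F_j}\big(x/f_1(x)\big)\ge C_1^{-1}f_1^p(x)\sum_j\overline{F_j}(x)$. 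Hence the mixed terms are $o\big(\sum_jP(w_jX_j>x)\big)$ uniformly in the weights. Raising $f_1$ this way keeps (\ref{10711}), since $h/f_1\le h/f_{11}$.
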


\proof
(1) For each function $h(\cdot)\in\prod_{i=1}^n\mathcal{H}_{F_i}$, the function $f_1(\cdot)$ satisfying condition (\ref{10711}) exists by Proposition \ref{pron104}(3), and the function $f_2(\cdot)$ satisfying condition (\ref{110001a}) exists obviously.

In order to prove (\ref{107}), we only need to show the following two relations, for $n\ge 2$,
\begin{equation}\label{2020}
\limsup\ \sup_{w_{i}\in[f_1(x),f_2(x)],1\le i\le n}\frac{P\left(\sum_{i=1}^{n}w_{i}X_{i}>x\right)}{\sum_{i=1}^{n}P(w_{i}X_{i}>x)}\leq1
\end{equation}
and
\begin{equation}\label{2030}
\liminf\ \inf_{w_{i}\in[f_1(x),f_2(x)],1\le i\le n}\frac{P\left(\sum_{i=1}^{n}w_{i}X_{i}>x\right)}{\sum_{i=1}^{n}P(w_{i}X_{i}>x)}\geq1.
\end{equation}

For (\ref{2020}), a division via function $h(\cdot)\in\prod_{i=1}^n\mathcal{H}_{F_i}$ leads to
\begin{eqnarray}\label{2040}
&&P\bigg(\sum_{i=1}^{n}w_{i}X_{i}>x\bigg)=P\bigg(\sum_{i=1}^{n}w_{i}X_{i}>x,\bigcup_{j=1}^{n}\{w_{j}X_{j}>x-h(x)\}\bigg)\nonumber\\
&&\ \ \ \ \ \ \ \ \ \ \ \ \ \ \ \ \ \ \ \ \ \ \ \ \ \ \ \ \ \ +P\bigg(\sum_{i=1}^{n}w_{i}X_{i}>x,\bigcap_{j=1}^{n}\{w_{j}X_{j}\leq x-h(x)\}\bigg)\nonumber\\
&=&P_{1}(x;n)+P_{2}(x;n),\ \ \ \ \ \ \ \ \ \ \ \ \ \ \ \ x\geq0.
\end{eqnarray}
Firstly, we deal with $P_{1}(x;n)$ for two cases that $w\in[1,f_2(x)]$ and $w\in[f_1(x),1)$. When $w\in[1,\infty)$, by $h(x)x^{-1}\downarrow0$, we have
$$w^{-1}h(x)\le h(xw^{-1}).$$
By (\ref{110001a}), thus $xf^{-1}_2(x)\uparrow\infty$, which, along with $F\in\mathcal{L}$, implies that for any $0<\varepsilon<1$, there exists a sufficiently large $x_1=x_1\big(F_i,1\le i\le n,h(\cdot),f_2(\cdot),\varepsilon\big)>0$ such that, when $x\ge x_1$, it holds uniformly for all $w\in[1,f_2(x)]$ that
\begin{eqnarray}\label{2050}
P\big(wX_i>x-h(x)\big)\le P\big(X_i>xw^{-1}-h\big(xw^{-1})\big)\le (1+\varepsilon)P(wX_i>x),\ \ \ 1\le i\le n.
\end{eqnarray}
When $w\in[f_1(x),1)$, 
since $h_1(\cdot)=f^{-1}_1(\cdot)h(\cdot)\in\prod_{i=1}^n\widehat{\mathcal{H}}_{F_i}$, then for any $\varepsilon$ as above, there exists a sufficiently large $x_2=x_2\big(F_i,1\le i\le n,h(\cdot),f_2(\cdot),f_1(\cdot),\varepsilon\big)\ge x_1$ such that, when $x\ge x_2$, it holds uniformly for all $w\in[f_1(x),1)$ and $1\le i\le n$ that
\begin{eqnarray}\label{20500}
&P\big(wX_i>x-h(x)\big)\le P\big(X_i>\frac{x}{w}-h_1(x)\big)\le P\big(X_i>\frac{x}{w}-h_1(\frac{x}{w})\big)\le(1+\varepsilon)P(wX_i>x).
\end{eqnarray}
Hence, by (\ref{2050}) and (\ref{20500}), we derive that, when $x\ge x_2$,
\begin{eqnarray*}\label{2060}
\sup_{w_i\in[f_1(x),f_2(x)],1\le i\le n}\frac{P_1(x;n)}{\sum_{i=1}^{n}P(w_{i}X_{i}>x)}
\le\sup_{w_i\in[f_1(x),f_2(x)],1\le i\le n}\frac{\sum_{i=1}^{n}P(w_{i}X_{i}>x-h(x))}{\sum_{i=1}^{n}P(w_{i}X_{i}>x)}
\leq1+\varepsilon,
\end{eqnarray*}
which yields that
\begin{eqnarray}\label{2060}
&&\limsup\sup_{w_i\in[f_1(x),f_2(x)],1\le i\le n}\frac{P_1(x;n)}{\sum_{i=1}^{n}P(w_{i}X_{i}>x)}\le1.
\end{eqnarray}
Next, we deal with $P_{2}(x;n)$. Clearly, for all $w\in(0, f_2(x)]$, by (\ref{110001a}), we have
$$w^{-1}h(x)\ge f^{-1}_2(x)h(x)\uparrow\infty.$$
Then, by the above assertion, $F\in\mathcal{L}\cap\mathcal{D}\subset\mathcal{D}$, the UTAI property of $\{X_i, i\ge1\}$, and Proposition \ref{pron103}(4) with its corresponding parameters $C_1$ and $p>J_F^+$, we conclude that for any $\varepsilon$ as above, there exists a sufficiently large $x_3=x_3(F_i,1\le i\le n,h(\cdot),
f_2(\cdot),f_1(\cdot),m,\varepsilon)\ge x_2$ such that, when $x\ge x_3$, it holds uniformly for $w_i\in[f_1(x),f_2(x)]$, $1\le i\le n$, that
\begin{eqnarray}\label{2070}
&&P_2(x;n)=P\bigg(\sum_{i=1}^{n}w_{i}X_{i}>x,\bigcap_{j=1}^{n}\{w_{j}X_{j}\leq x-h(x)\},\bigcup_{k=1}^{n}\{w_{k}X_{k}>n^{-1}x\}\bigg)\nonumber\\
&\leq&\sum_{k=1}^{n}P\bigg(\sum_{1\le i\le n,i\neq k}w_{i}X_{i}>h(x),w_{k}X_{k}>n^{-1}x\bigg)\nonumber\\
&\leq&\sum_{k=1}^{n}P\bigg(\bigcup_{1\le i\le n,i\neq k}\Big\{w_{i}X_{i}>\frac{h(x)}{n-1}\Big\},w_{k}X_{k}>n^{-1}x\bigg)\nonumber\\
&\leq&\sum_{k=1}^{n}\bigg(\sum_{1\le i\le n,i\neq k}P\Big(X_{i}>\frac{h(x)}{w_i(n-1)}\Big|X_{k}>\frac{x}{w_kn}\Big)\bigg)P\bigg(X_k>\frac{x}{w_kn}\bigg)\nonumber\\
&<&\varepsilon C_1^{-1}n^{-p}\sum_{k=1}^{n}P(w_kX_k>n^{-1}x)\nonumber\\
&<&\varepsilon\sum_{k=1}^{n}P(w_kX_k>x).
\end{eqnarray}
Thus, by (\ref{2070}) and the arbitrariness of $\varepsilon$, we obtain that
\begin{eqnarray}\label{2070000}
&&\sup_{w_i\in[f_1(x),f_2(x)],1\le i\le n}\frac{P_2(x;n)}{\sum_{i=1}^{n}P(w_iX_i>x)}=0.
\end{eqnarray}
As a result, for each integer $n\ge2$, relation ({\ref{2020}) follows by substituting (\ref{2060}) and (\ref{2070000}) into (\ref{2040}).

For (\ref{2030}), we use function $h(\cdot)$ as above to obtain that
\begin{eqnarray}\label{2080}
&&P\bigg(\sum_{i=1}^{n}w_{i}X_{i}>x\bigg)\geq P\bigg(\sum_{i=1}^{n}w_{i}X_{i}>x, \bigcup_{1\le j\le n}\{w_{j}X_{j}>x+h(x)\}\bigg)\nonumber\\
&\ge&\sum_{j=1}^{n}P\bigg(\sum_{i=1}^{n}w_{i}X_{i}>x, w_{j}X_{j}>x+h(x)\bigg)-\sum_{1\leq j<i\leq n}P\big(w_{i}X_{i}>x+h(x),w_{j}X_{j}>x+h(x)\big)\nonumber\\
&\geq&\sum_{j=1}^{n}P\bigg(\sum_{1\leq i\leq n,i\neq j}w_{i}X_{i}>-h(x), w_{j}X_{j}>x+h(x)\bigg)\nonumber\\
&&\qquad\qquad\qquad-\sum_{1\leq j<i\leq n}P\big(w_{i}X_{i}>x+h(x), w_{j}X_{j}>x+h(x)\big)\nonumber\\
&&\geq\sum_{j=1}^{n}P\big(w_{j}X_{j}>x+h(x)\big)-\sum_{j=1}^{n}P\bigg(\sum_{1\le i\le n,i\neq j}w_{i}X_{i}\le -h(x),w_{j}X_{j}>x+h(x)\bigg)\nonumber\\
&&\qquad\qquad\qquad-\sum_{1\leq j<i\leq n}P(w_{i}X_{i}>x,\ w_{j}X_{j}>x)\nonumber\\
&&=:P_{3}(x;n)-P_{4}(x;n)-P_{5}(x;n).
\end{eqnarray}
Firstly, we consider $P_3(x;n)$. By $F\in\cal{L}$ and the derivation of (\ref{2060}) for $P_1(x;n)$, for any $\varepsilon$ as above, there exists a sufficiently large $x_4=x_4(F_i,1\le i\le n,h(\cdot),f_2(\cdot),f_1(\cdot),\varepsilon)\ge x_3$ such that, when $x\ge x_4$,
\begin{eqnarray}\label{equation209}
\inf_{w_i\in[f_1(x),f_2(x)],1\le i\le n}\frac{P_3(x;n)}{\sum_{i=1}^{n}P(w_{i}X_{i}>x)}\geq1-\varepsilon.
\end{eqnarray}
Secondly, we deal with $P_{4}(x;n)$ by a similar method to that in $P_{2}(x;n)$. For any $\varepsilon$ as above and for each integer $m\ge2$, by $F\in\mathcal{L}\cap\mathcal{D}\subset\mathcal{D}$, $f_2(x)h^{-1}(x)\to0$ and the TAI property of $X_i,i\ge1$, there exists a sufficiently large
$x_5=x_5(F_i,1\le i\le n,h(\cdot),f_2(\cdot),f_1(\cdot),\varepsilon)\ge x_4$ such that, when $x\ge x_5$,
\begin{eqnarray}\label{2100}
&&\sup_{w_{i}\in[f_1(x),f_2(x)],1\le i\le n}\frac{P_{4}(x;n)}{\sum_{j=1}^{n}P({w_jX_j}>x)}\nonumber\\
&\le&\sup_{w_{i}\in[f_1(x),f_2(x)],1\le i\le n}\frac{\sum_{j=1}^{n}P\big(\bigcup_{1\leq i\leq n,i\neq j}
\big\{w_iX_{i}\le-\frac{h(x)}{n-1}\big\},w_jX_{j}>x\big)}{\sum_{j=1}^{n}P(w_jX_j>x)}\nonumber\\
&\leq&\sup_{w_{i}\in[f_1(x),f_2(x)],1\le i\le n}\frac{\sum_{j=1}^{n}\sum_{1\leq i\leq n,i\neq j}
P\big(|X_{i}|>{\frac{h(x)}{w_i(n-1)}}\big|X_{j}>\frac{x}{w_j}\big)P(X_{j}>\frac{x}{w_j})}{\sum_{j=1}^{n}P(w_jX_j>x)}\nonumber\\
&<&\varepsilon.
\end{eqnarray}
Thirdly, we turn to $P_{5}(x;n)$. By the UTAI property, 
for any $\varepsilon$ as above, there exists a sufficiently large $x_6=x_6\big(F_i,1\le i\le n,h(\cdot),f_2(\cdot),f_1(\cdot),m,\varepsilon\big)\ge x_5$ such that, when $x\ge x_6$,
\begin{eqnarray}\label{2110}
&&\sup_{w_{i}\in[f_1(x),f_2(x)],1\le i\le n}\frac{P_{5}(x;n)}{\sum_{i=1}^{n}P(w_{i}X_{i}>x)}\nonumber\\
&=&\sup_{w_{i}\in[f_1(x),f_2(x)],1\le i\le n}\frac{{\sum_{1\leq j<i\leq n}P(w_{j}X_{j}>x\mid w_{i}X_{i}>x)P(w_{i}X_{i}>x)}}{\sum_{i=1}^{n}P(w_{i}X_{i}>x)}\nonumber\\
&<&\varepsilon.
\end{eqnarray}
Therefore, by (\ref{2080})-(\ref{2110}) and the arbitrariness of $\varepsilon$, relation (\ref{2030}) holds for each integer $n\ge 2$. 

Consequently, we combine (\ref{2020}) and (\ref{2030}) to show that relation (\ref{107}) holds for each $n\ge1$.\\ 
%

(2) According to the proof of Lemma \ref{lemma201}(1), we just deal with $P_4(x;n)$ for UTAI r.v.s and $n\ge2$. To this end,
we first establish a fact that, for some $p>\max\{J^+_{F_i},1\le i\le n\}$, by (\ref{10800a}) with some $h(\cdot)\in\prod_{i=1}^n\mathcal{H}_{F_i}$ and the corresponding function $f_2(\cdot)$, there exists a function $f_{1}(\cdot)=f_{1}\big(\cdot;h(\cdot),f_2(\cdot),p\big)$ satisfying (\ref{10711}) such that
\begin{eqnarray}\label{108}
&f^{-p}_{1}(x)F_i\big(-h(x)f_2^{-1}(x)(n-1)^{-1}\big)\big(\sum_{j=1}^n\overline{F_j}(x)\big)^{-1}\to0,\ \ \ \ \ \ \ 1\le i\le n.
\end{eqnarray}
For example, we take a positive function $f_{11}(\cdot)$ on $[0,\infty)$ such that $1\ge f_{11}(x)\downarrow0$ and $h(\cdot)f^{-1}_{11}(\cdot)\in\mathcal{\widehat{H}}_F$,
and take a positive function $f_{12}(\cdot)=f_{12}(\cdot;h(\cdot),p,\theta)$ on $[0,\infty)$ for some $0<\theta<1$ defined by
\begin{eqnarray*}
&f_{12}(x)=\max_{1\le i\le n}\min\big\{1,\sup_{y\ge x}\big(F_i\big(-h(y)f_2^{-1}(y)(n-1)^{-1}\big)\big(\sum_{j=1}^n\overline{F_j}(y)\big)^{-1}\big)^{\theta p^{-1}}\big\}.
\end{eqnarray*}
Now we take function $f_1(\cdot)=\max\{f_{11}(\cdot), f_{12}(\cdot)\}$ on $[0,\infty)$, which, clearly, satisfies (\ref{10711}). In the following, we prove that function $f_1(\cdot)$ satisfies (\ref{108}). In fact, for each $1\le i\le n$, by (\ref{10800a}), we have
\begin{eqnarray*}
&f^{-p}_{1}(x)\frac{F_i\big(-\frac{h(x)}{f_2(x)(n-1)}\big)}{\sum_{j=1}^n\overline{F_j}(x)}\le f^{-p}_{12}(x)\frac{F_i\big(-\frac{h(x)}{f_2(x)(n-1)}\big)}{\sum_{j=1}^n\overline{F_j}(x)}
\le\bigg(\frac{F_i\big(-\frac{h(x)}{f_2(x)(n-1)}\big)}{\sum_{j=1}^n\overline{F_j}(x)}\bigg)^{1-\theta}\to0.
\end{eqnarray*}


For each $n\ge2$ and for the above functions $h(\cdot),\ f_2(\cdot)$ and $f_1(\cdot)$, by $F_i\in\mathcal{L}\cap\mathcal{D}\subset\mathcal{D},\ 1\le i\le n$, Proposition \ref{pron103}(4), and (\ref{108}), we find that, for $\varepsilon$ as above, there exists a sufficiently large $x'_{5}=x'_{5}(F,h(\cdot),f_2(\cdot),f_1(\cdot),\varepsilon)\ge x_5$ such that, when $x\geq x'_{5}$,
\begin{eqnarray*}
&&\sup_{w_i\in[f_1(x),f_2(x)],1\le i\le n}\frac{P_{4}(x;n)}{\sum_{j=1}^{n}{P(w_jX_j>x)}}\nonumber\\
&\le&\sup_{w_i\in[f_1(x),f_2(x)],1\le i\le n}\frac{\sum_{j=1}^{n}P\big(\bigcup_{1\leq i\leq n,i\neq j}
\big\{X_{i}\le-\frac{h(x)}{w_i(n-1)}\big\}, w_jX_{j}>x\big)}{\sum_{j=1}^{n}{P(w_jX_j>x)}}\nonumber\\
&\leq&\sup_{w_i\in[f_1(x),f_2(x)],1\le i\le n}
\frac{\sum_{j=1}^{n}\sum_{1\leq i\leq n,i\neq j}P\big(X_{i}\le-\frac{h(x)}{f_2(x)(n-1)}\big)}{\sum_{j=1}^{n}{P(w_jX_j>x)}}\nonumber\\
&\leq&\sup_{w_i\in[f_1(x),f_2(x)],1\le i\le n}\frac{nC_1f^{-p}_{1}(x)\sum_{i=1}^nP\big(X^{-}_{i}\ge\frac{h(x)}{f_2(x)(n-1)}\big)}{\sum_{i=1}^n\overline{F_i}(x)}
\frac{C_1^{-1}f^{p}_{1}(x)\sum_{i=1}^n\overline{F_i}(x)}{\sum_{i=1}^{n}{\overline{F_i}\big(f^{-1}_{1}(x)x\big)}}\nonumber\\
&<&\varepsilon,
\end{eqnarray*}
which proves that relation (\ref{107}) still holds.
$\hspace{\fill}\Box$\\

Now we prove Theorem \ref{thm102}. Firstly, we prove that there exist two functions $f_1(\cdot)$ satisfying (\ref{10711}) and $f_2(\cdot)$ satisfying (\ref{110001a}) and (\ref{11000}).

For each function $h(\cdot)\in\prod_{i=1}^n\mathcal{H}_{F_i}$, by Proposition \ref{pron104}(3), there exists a positive function $f_1(\cdot)$ on $(0,\infty)$ satisfying (\ref{10711}).

For each $1\le i\le n$, by $F_i\in\mathcal{D}$ and Theorem 3.3(ii) of Cline and Samarodnitsky \cite{CS1994}, we have $H_i\in\mathcal{D}$. 
Further, for each function $h(\cdot)\in\prod_{i=1}^n\mathcal{H}_{F_i}$ satisfying (\ref{110001}), by $H_i\in\mathcal{D},\ 1\le i\le n,$ (\ref{110001}) and Proposition \ref{pron103}(1), there exists a positive function $f_2(\cdot)$ on $(0,\infty)$ satisfying (\ref{110001a}) and (\ref{11000}).

(1) Write $B_n=\bigcap_{i=1}^n\{f_1(x)\le W_i\le f_2(x)\}$. Then by using $B_n$, we have
\begin{eqnarray}\label{2150}
&P\big(\sum_{i=1}^{n}W_{i}X_{i}>x\big)=P\big(\sum_{i=1}^{n}W_{i}X_{i}>x,B_n\big)+P\big(\sum_{i=1}^{n}W_{i}X_{i}>x,B^c_n\big)
\mathcal{}=:P_1(x)+P_2(x).
\end{eqnarray}
For $P_2(x)$, by $D_n=\bigcup_{i=1}^{n}\{W_i>f_2(x)\}$, we split it into two parts as
\begin{eqnarray*}\label{2170}
&P_2(x)=P\big(\sum_{i=1}^{n}W_{i}X_{i}>x,B^c_n,D^c_n\big)+P\big(\sum_{i=1}^{n}W_{i}X_{i}>x,B^c_n,D_n\big)=:P_{21}(x)+P_{22}(x).
\end{eqnarray*}
Firstly, we deal with $P_{22}(x)$. By (\ref{11000}), 
we find that
\begin{eqnarray}\label{2190}
&P_{22}(x)\le P(D_n)\le\sum_{i=1}^{n}\overline{G_i}\big(f_2(x)\big)
=o\big(\sum_{i=1}^{n}\overline{H_{i}}(x)\big).
\end{eqnarray}
Secondly, we turn to $P_{21}(x)$. For each $1\le i\le n$, it holds that
\begin{eqnarray}\label{2180}
P(W_iX_i>x)\ge P(W_iX_i>x, 1<W_i\le b)\ge P(X_i>x)P(1<W_i\le b),
\end{eqnarray}
where $b>1$ is large enough such that $P(1<W_i<b)>0$. Thus by Proposition \ref{pron103}(4), (\ref{2180}) and (\ref{110000}), it follows that for a sufficiently large $x$,
\begin{eqnarray}\label{2200}
&&P_{21}(x)\le\sum_{i=1}^{n}\sum_{j=1}^{n}P\big(W_{i}X_{i}>n^{-1}x,0<W_j\le f_1(x),D_n^c\big)\nonumber\\
&\le&\sum_{i=1}^{n}P\big(W_{i}X_{i}>n^{-1}x,0<W_i\le f_1(x)\big)
+\sum_{i=1}^{n}\sum_{1\le j\neq i\le n}P\big(W_{i}X_{i}>n^{-1}x,0<W_j\le f_1(x),D^c_n\big)\nonumber\\
&\le&C_1n^{p+1}\bigg(f^p_1(x)\sum_{i=1}^{n}P\big(0<W_i\le f_1(x)\big)P(X_{i}>x)+f^p_2(x)\sum_{j=1}^{n}P\big(0<W_j\le f_1(x)\big)P(X_{i}>x)\bigg)\nonumber\\
&=&o\bigg(\sum_{i=1}^{n}P(W_iX_i>x)\bigg).
\end{eqnarray}
{For $P_1(x)$, according to Lemma \ref{lemma201}(1), (\ref{110000}) and the proof of $P_{21}(x)$, we have}
\begin{eqnarray}\label{2160}
&&P_1(x)= {\int\cdots\int_{f_1(x)\le w_i\le f_2(x),1\le i\le n}}P\bigg(\sum_{i=1}^{n}w_{i}X_{i}>x\bigg)P(W_i\in dw_i,1\le i\le n)\nonumber\\
&\sim&\int\cdots\int_{f_1(x)\le w_i\le f_2(x),1\le i\le n}\sum_{i=1}^{n}P(w_{i}X_{i}>x)P(W_i\in dw_i,1\le i\le n)\nonumber\\
&\sim&\sum_{i=1}^{n}P(W_{i}X_{i}>x){-\sum_{i=1}^{n}P\big(W_{i}X_{i}>x,W_i>f_2(x)\big)-\sum_{i=1}^{n}P\big(W_{i}X_{i}>x,W_i\le f_1(x)\big)}\nonumber\\
&\sim&\sum_{i=1}^{n}P(W_{i}X_{i}>x).
\end{eqnarray}
As a consequence, we combine (\ref{2150}), (\ref{2190}), (\ref{2200}) and (\ref{2160}) to reach relation (\ref{101100}) immediately.\\

(2) Except when dealing with $P_1(x)$, we need an additional condition (\ref{10800a}), and the other proofs are the same as those in (1).


\subsection{\normalsize\bf Proof of Theorem \ref{thm101}}

\qquad According to Theorem 1.B and Remark \ref{rem102}(4), it suffices to prove (\ref{11600}), then to decompose $P(X_i\prod_{j=1}^i$ $Y_j>x),\ 1\le i\le n$. To this end, we give the following lemma, which can be regraded as an extension of Breiman's theorem. For related results, we refer to Breiman \cite{B1965}, Corollary 3.6 (iii) of Cline and Samorodnitsky \cite{CS1994}, Lemma 2.2 of Konstantindes and Mikosch \cite{KM2005}, Proposition 2.1 of Denisov and Zwart \cite{DZ2007}, Theorem 2.1 of Yang and Wang \cite{YW2013}, Theorem 2.5 of Maulik and Podder \cite{MP2016}, Theorem 2.1 of Cui and Wang \cite{CW2025}, and references therein.

\begin{lemma}\label{lem202}
Let the conditions of Theorem \ref{thm101} are valid.

(1) There exists a positive function $f(\cdot)$ on $[0,\infty)$ satisfying (\ref{1121}) such that
\begin{eqnarray}\label{21111}
&\overline{H}_i(x)=P\big(X\prod_{j=1}^iY_j>x\big)\sim\big(E^{i}Y_1^\alpha\textbf{\emph{1}}_{\{Y_1\le f(x)\}}\big)\overline{F}(x)=I^{i}(x)\overline{F}(x),\ \ \ \ \ i\ge1.
\end{eqnarray}
Especially, in Cases 1 or 2, $H_i\in\mathcal{R}_{-\alpha}$ and
\begin{eqnarray}\label{2091}
\overline{H}_i(x)\sim (E^{i}Y_1^{\alpha})\overline{F}(x).
\end{eqnarray}

(2) Relation (\ref{11600}) holds.
\end{lemma}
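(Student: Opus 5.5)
I would treat part (1) by induction on $i$, with the case $i=1$ carrying the real content: an extended Breiman theorem for one product $XY$ when $F\in\mathcal{R}_{-\alpha}$ and $\overline G(x)=o(\overline{H_1}(x))$. The function $f$ in (\ref{1121}) is supplied by Proposition \ref{pron103}(1), applied with $V=H_1$ and $U=G$. This is legitimate because $H_1\in\mathcal{D}$ (Theorem 3.3(ii) of Cline and Samorodnitsky, already used in the proof of Theorem \ref{thm102}). We may also take $f(x)\le x^{1/2}$, say. Replacing $f$ by a smaller function that still tends to infinity only helps: since $f(x)/x\downarrow 0$, the condition $\overline G(f(x))=o(\overline{H_1}(x))$ is preserved as long as $f$ has not become too small, so in practice I would take the minimum of the two constructions.

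For $i=1$, split according to whether $Y\le f(x)$ or $Y>f(x)$:
\[
\overline{H_1}(x)=\int_{(0,f(x)]}\overline F(x/y)\,G(dy)+P\big(XY>x,\,Y>f(x)\big).
\]
The second term is at most $\overline G(f(x))=o(\overline{H_1}(x))$. For the first term, write $\overline F(x/y)/\overline F(x)=y^{\alpha}L(x/y)/L(x)$ and treat the two ranges of $y$ separately.

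On $y\in(0,1]$, Potter's bounds give $\overline F(x/y)\le C y^{\alpha-\delta}\overline F(x)$. Combined with pointwise convergence, dominated convergence yields
\[
\int_{(0,1]}\overline F(x/y)\,G(dy)\sim \overline F(x)\,E\big[Y^\alpha \mathbf 1_{\{Y\le1\}}\big].
\]
Strictly, Potter needs $x/y$ large, which holds here; the case $\alpha=0$ needs a small additional remark.

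On $y\in(1,f(x)]$ the ratio $L(x/y)/L(x)$ is handled case by case.
\begin{itemize}
\item In Case 1, Potter's bound $y^{\alpha+\delta}$ with $\delta<p-\alpha$ is integrable, and dominated convergence gives $EY^\alpha\overline F(x)$.
\item In Cases 2 and 3, condition (\ref{1131}) bounds the ratio uniformly by a constant on $1\le y\le f(x)$. Fix a large $M$. On $(1,M]$ the ratio converges uniformly to $1$. On $(M,f(x)]$ I need $L(x/y)/L(x)\to1$ in an averaged sense, which is the main obstacle.
\end{itemize}
For Case 2 the obstacle dissolves: the tail $\int_{y>M}y^\alpha G(dy)$ is small, so the bounded ratio makes that piece negligible, and we get $I(x)\overline F(x)\sim EY^\alpha\,\overline F(x)$. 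For Case 3, where $I(x)\uparrow\infty$, I would shrink $f$ if necessary so that $L(x/y)/L(x)\to1$ uniformly on $1\le y\le f(x)$. Uniform convergence theorem arguments give this for some $f\uparrow\infty$, and the minimum of the two constructions keeps (\ref{1121}). Then $\int_{(1,f(x)]}\overline F(x/y)\,G(dy)\sim \overline F(x)\,E[Y^\alpha\mathbf 1_{\{1<Y\le f(x)\}}]$. Note also that the contribution on $(1,M]$ is of order $I(M)$ while the total is of order $I(x)\to\infty$, which again makes the bounded region harmless.

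For the induction step, write $X\prod_{j\le i}Y_j=(X\prod_{j<i}Y_j)\,Y_i$ and use $\overline{H_{i-1}}(x)\sim I^{i-1}(x)\overline F(x)$. Then repeat the $i=1$ argument with $F$ replaced by $H_{i-1}$. The key point is that $I^{i-1}$ varies slowly along $x/y$ for $y\le f(x)$: this holds because $I$ is nondecreasing and $I(x/y)\ge I(x^{1/2})$, so I would also choose $f$ to ensure $I(x^{1/2})\sim I(x)$. This choice is the delicate technical point in Case 3. The same split then gives $I^{i}(x)\overline F(x)$. The tail piece $P(Y_i>f(x))=o(\overline{H_1}(x))$ is also $o(\overline{H_i}(x))$, because $\overline{H_i}\gtrsim c\,\overline{H_1}$. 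In Cases 1 and 2, $I(x)\to EY^\alpha<\infty$, which gives (\ref{2091}), and $H_i\in\mathcal R_{-\alpha}$ follows.

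For part (2) I need (\ref{11600}), namely $\overline{G_i}(x)=o(\overline{H_i}(x))$. For $i=1$ this is the hypothesis (\ref{1140}). For $i\ge2$, split
\[
P\Big(\prod_{j\le i}Y_j>x\Big)\le P\Big(\max_{j} Y_j>f(x)\Big)+P\Big(\prod_{j\le i} Y_j\mathbf 1_{\{Y_j\le f(x)\}}>x\Big).
\]
The first term is $o(\overline{H_1}(x))=o(\overline{H_i}(x))$. For the second I would use Markov's inequality with exponent $\alpha+\delta$ on the truncated variables. The resulting bound $x^{-\alpha-\delta}\big(E[Y^{\alpha+\delta}\mathbf 1_{\{Y\le f(x)\}}]\big)^i\le x^{-\alpha-\delta}f(x)^{i\delta}I^i(x)$ is $o(I^i(x)\overline F(x))$ provided $f(x)\le x^{\epsilon}$ with $\epsilon$ small, which I can impose when choosing $f$.
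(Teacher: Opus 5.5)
Your handling of $f$ has the monotonicity backwards, and in Case 3 this cannot be repaired. Condition (\ref{1121}) requires $\overline G(f(x))=o(\overline{H_1}(x))$, which is a \emph{lower} bound on $f$. Shrinking $f$ increases $\overline G(f(x))$, so taking the minimum of two constructions does not preserve (\ref{1121}). Whenever $\overline G$ is regularly varying with index $-\alpha$ (the natural source of Cases 2 and 3), $\overline{H_1}(x)=x^{-\alpha+o(1)}$, and (\ref{1121}) forces $\ln f(x)\sim\ln x$. Then $f(x)\le x^{1/2}$ and $f(x)\le x^{\epsilon}$ are impossible, and uniform convergence of $L(x/y)/L(x)$ on $[1,f(x)]$ cannot be obtained by shrinking $f$ (it fails for $L=\ln$). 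So your bound of $P(XY>x,\,Y>f(x))$ by $\overline G(f(x))$, the device $I(x^{1/2})\sim I(x)$ in the induction, and the Markov estimate in part (2) all rest on an $f$ that does not exist.

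This is not cosmetic: in Case 3, $I(x)$ depends on $f$ to first order, and (\ref{21111}) itself fails. Take $\overline F(x)=\overline G(x)=x^{-\alpha}$ for $x\ge1$, so $L\equiv1$, $EY_1^\alpha=\infty$ and (\ref{1131}) is trivial. The logarithm of a product of $i+1$ such factors is Gamma distributed, so $\overline{H_i}(x)\sim\frac{\alpha^i}{i!}x^{-\alpha}\ln^i x$, and (\ref{1140}) holds. Here $I(x)=\alpha\ln f(x)$, and (\ref{1121}) forces $f(x)\ge x(\ln x)^{-1/\alpha}$ eventually. Hence $I^i(x)\overline F(x)\sim i!\,\overline{H_i}(x)$, which is false for $i\ge2$. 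With $f\le x^{1/2}$ you would get $I(x)\overline F(x)\lesssim\frac12\overline{H_1}(x)$, false already for $i=1$. With the same $G$ and $\overline F(x)=x^{-\alpha}\ln x$ for large $x$, (\ref{1131}) holds (the ratio is at most $1$). Yet $\overline{H_1}(x)\sim\frac{\alpha}{2}x^{-\alpha}\ln^2x$, while $I(x)\overline F(x)\sim\alpha x^{-\alpha}\ln^2x$ for every admissible $f$.

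So Case 3 needs a hypothesis beyond (\ref{1131}): one making $L(x/y)/L(x)$ and $I(x/y)/I(x)$ close to $1$ on the bulk of the $y^\alpha G(dy)$-mass in $[1,f(x)]$. That is exactly what your two ``shrink $f$'' steps were trying to supply. It does hold in Example \ref{exam4040}(1), where $L$ is constant and $I(x)\sim\alpha\ln\ln x$.

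The paper's own proof has the same hole. Its dominated-convergence and L\'evy-lemma steps are invalid in Case 3, because the normalized measure $y^\alpha G(dy)/I(x)$ on $[1,f(x)]$ escapes to infinity, so pointwise convergence of $L(x/y)/L(x)$ gives nothing. Its induction also tacitly uses $I^k(x/y)\approx I^k(x)$ for $y\le f(x)$.

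What you can keep:
\begin{itemize}
\item Cases 1 and 2, with the $f$ from (\ref{1121}) left unshrunk. There $I(x/y)\to EY_1^\alpha$ uniformly in $y\le f(x)$, so the induction goes through.
\item Part (2) in every case, if you replace Markov by the comparison the paper uses. Write $\overline{G_{i+1}}(x)=\int\overline{G_i}(x/y)G(dy)$ and $\overline{H_{i+1}}(x)=\int\overline{H_i}(x/y)G(dy)$. Since $x/y\ge x/f(x)\to\infty$ for $y\le f(x)$, the induction hypothesis gives $\overline{G_{i+1}}(x)\le\varepsilon\overline{H_{i+1}}(x)+\overline G(f(x))$ for large $x$. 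Moreover $\overline G(f(x))=o(\overline{H_1}(x))=o(\overline{H_{i+1}}(x))$, because $\overline{H_1}=O(\overline{H_{i+1}})$. No asymptotics of $\overline{H_i}$ are needed.
\end{itemize}
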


\begin{remark}\label{rem201}
(1) In comparison to the existing versions of Breiman's theorem, the conclusion in Case 3 of Lemma \ref{lem202} is a novel contribution to the field, as it does not require that $EY_1^\alpha<\infty$. In this case, however, we are not sure that $H_i\in\mathcal{R}_{-\alpha}$, as that in Cases 1 and 2. On the one hand, for given $G$ and $f(\cdot)$, one can derive that $I(\cdot)\in\mathcal{R}^0_0$, and then determine $H_n\in\mathcal{R}_{-\alpha}$, see Example \ref{exam4040}(1) below. On the other hand, Example \ref{exam4040}(2) provides a negative answer for Question 4.


(2) Note that relation (\ref{2091}) for Cases 1 and 2 is the special cases of (4.8) in Yang and Wang \cite{YW2013}, more generally, of (3.8) in Cui and Wang \cite{CW2025}. The results in both the literature above require the condition that
\begin{eqnarray}\label{2241}
\overline{G}(x)=o\big(\overline{F}(x)\big).
\end{eqnarray}
Clearly, condition (\ref{1140}) of Theorem \ref{thm101} is weaker than condition (\ref{2241}) in Case 3. For example, we take $G=F\in\mathcal{R}_{-\alpha}$ for some $\alpha>0$ satisfying (\ref{1131}), then (\ref{1140}) is satisfied by  $I(x)\uparrow\infty$, and thus (\ref{21111}) still holds by Lemma \ref{lem202}, but (\ref{2241}) is fail. And in Case 1 and Case 2, (\ref{1140}) is equivalent to (\ref{2241}).

(3) Condition (\ref{11500}) is weaker than condition (\ref{11600}) in Theorem \ref{thm102}(1). For example, we take $n=2$ and distributions $F_i\in\mathcal{R}_{-\alpha}$ for some $\alpha>0$ with corresponding function $L_i(\cdot)\in\mathcal{R}^0_0$ satisfying condition (\ref{1131}), $i=1,2$. Further, we take $W_1=W_2$ satisfying $EW_1^\alpha<\infty$, and $G_1=G_2=F_2$. If $\overline{F_2}(x)=o\big(\overline{F_1}(x)\big)$, then by Lemma \ref{lem202}, we have
\begin{eqnarray*}
\overline{G_1}(x)=o\big(\overline{H_1}(x)\big)\ \ \ \ \ \ \text{and}\ \ \ \ \ \ \overline{G_2}(x)=o\big(\overline{H_1}(x)+\overline{H_2}(x)\big),\ \ \ \ \ \ \text{but}\ \ \ \ \ \ \
\overline{G_2}(x)\neq o\big(\overline{H_2}(x)\big).
\end{eqnarray*}
\end{remark}

\begin{proof}[\it Proof of Lemma \ref{lem202}.] According to Remark \ref{rem201}(2), we only need to prove (\ref{21111}) for Case 3 with $\alpha>0$ by induction.

When $i=1$, by (\ref{1140}), there exists a function $f(\cdot)$ satisfying condition (\ref{1121}). On the one hand, for any $0<\varepsilon<1$, there exists a number $x_0=x_0(F,G,\varepsilon)$ large enough such that, when $x\ge x_0$,
\begin{eqnarray*}
&(1-\varepsilon)\overline{H}_1(x)\le\int_0^{1}\overline{F}(x)G(dy)+\int_1^{f(x)}\overline{F}(xy^{-1})G(dy).
\end{eqnarray*}
Further, by (\ref{1131}), we know that
\begin{eqnarray*}
&\frac{(1-\varepsilon)\overline{H}_1(x)}{I(x)\overline{F}(x)}\le\int_0^{1}\frac{G(dy)}{I(x)}+\int_1^{f(x)}\frac{\overline{F}(xy^{-1})G(dy)}{I(x)\overline{F}(x)}<\infty.
\end{eqnarray*}
Thus, by the dominant convergence theorem and the arbitrariness of $\varepsilon$,
\begin{eqnarray}\label{21311}
\overline{H}_1(x)\lesssim I(x)\overline{F}(x).
\end{eqnarray}
On the other hand, by L$\acute{e}$vy's lemma and $F\in\mathcal{R}_{-\alpha}$,
\begin{eqnarray}\label{21411}
\overline{H}_1(x)\gtrsim I(x)\overline{F}(x).
\end{eqnarray}
By combining (\ref{21311}) and (\ref{21411}), relation (\ref{21111}) holds for $i=1$.

Suppose that relation (\ref{21111}) holds for $i=k$, then when $n=k+1$, we conclude by the inductive hypothesis and the methods similar to those used when $n=1$ that
\begin{eqnarray*}\label{308}
&\overline{H}_{k+1}(x)\sim\int_1^{f(x)}\overline{H_k}(xy^{-1})G(dy)\sim I^{k+1}(x)\overline{F}(x),
\end{eqnarray*}
namely that relation (\ref{21111}) holds for $i=k+1$, and therefore relation (\ref{21111}) holds for all $i\geq1$.

(2) It suffices to prove (\ref{11600}) in Case 3. For $i=2$, by (\ref{1140}), (\ref{1131}) and Lemma \ref{lem202}(1), we have
\begin{eqnarray*}\label{308}
&\overline{G}_{2}(x)=o\big(\int_1^{f(x)}\overline{H_1}(xy^{-1})G(dy)+\overline{H_1}(x)\big)=o\big(I^2(x)\overline{F}(x)+\overline{H_1}(x)\big)
=o\big(\overline{H_2}(x)\big).
\end{eqnarray*}
Therefore, (\ref{11600}) in Case 3 is holds by induction.
\end{proof}

%
%

Accordingly, by Theorem 1.B and Lemma \ref{lem202}, we show that relations (\ref{1141})-(\ref{1151}) in Theorem \ref{thm101} hold immediately.

%

\subsection{\normalsize\bf Proof of Coroliary \ref{Corol101}}

\qquad (1) For each $a>0$, let $W_i^{(1)}=W_i\textbf{1}_{\{W_i\le a\}}$ and $W_i^{(2)}=W_i\textbf{1}_{\{W_i>a\}},\ 1\le i\le n$.
Then by Theorem \ref{thm102} without condition (\ref{110000}}), Remark \ref{rem102}(2) and condition (\ref{3100}), we have
\begin{eqnarray}\label{2261a}
&P\big(\sum_{i=1}^{n}W_{i}X_{i}>x\big)\ge P\big(\sum_{i=1}^{n}W_{i}^{(2)}X_{i}>x\big)\sim\sum_{i=1}^{n}P(W_{i}^{(2)}X_{i}>x)\sim\sum_{i=1}^{n}\overline{H_i}(x).
\end{eqnarray}

(2) Using the proof of (3.8) in Cheng \cite{C2014}, and by conditions (\ref{110001}), $F_i\in\mathcal{C},\ 1\le i\le n$, (\ref{3100}) and Theorem \ref{thm102}, we know that
\begin{eqnarray}\label{2262b}
&&P\bigg(\sum_{i=1}^{n}W_{i}X_{i}>x\bigg)\le P\bigg(\sum_{i=1}^{n}W_{i}^{(1)}X^+_{i}>\varepsilon x\bigg)
+P\bigg(\sum_{i=1}^{n}W_{i}^{(2)}X_{i}>(1-\varepsilon)x\bigg)\nonumber\\
&\lesssim&\sum_{i=1}^{n}P(X^+_{i}>a^{-1}\varepsilon x)+\sum_{i=1}^{n}\overline{F_i}^*(1-\varepsilon)\overline{H_i}(x)\nonumber\\
&\sim&\sum_{i=1}^{n}\overline{H_i}\big(x)\ \ \ \ \ \ \ \ \ \text{as}\ \ \ \varepsilon\downarrow0.
\end{eqnarray}
Combining (\ref{2261a}) and (\ref{2262b}), we obtain relation (\ref{101100}) and $n^{-1}\sum_{i=1}^{n}H_i\in\mathcal{C}$.

\subsection{\normalsize\bf Proof of Proposition \ref{pron303}}

\qquad Obviously, by (\ref{1231}), there exist two constants $\delta$ and $p$ satisfying $\gamma^{-1}_1\gamma_2\mathbf{J}_F^+<\gamma^{-1}_1\gamma_2p<\delta<\mathbf{I}_K$ such that $x^\delta\overline{K}(x)\to0$.
Therefore, by Proposition 1(ii) of Cui and Wang \cite{CW2020}, $x^\delta\overline{K_i}(x)\to0$ for all $i\ge1$, which results into relation (\ref{110000}).

\section{\normalsize\bf Applications to risk theory}
\setcounter{equation}{0}\setcounter{thm}{0}\setcounter{lemma}{0}\setcounter{remark}{0}\setcounter{pron}{0}\setcounter{Corol}{0}
\setcounter{Corol}{0}

\qquad In this section, using Theorem \ref{thm101}, we give some asymptotic estimates of finite-time and infinite-time ruin probability in a discrete-time risk model, which originated from Nyrhinen \cite{N1999, N2001}.

For each $1\le i\le n$, let the net insurance loss during period $i$ be a r.v. $X_i$ with common distribution $F$ on $(-\infty,\infty)$, and let the stochastic discount factor be a r.v. $Y_i$ with common distribution $G$ on $(0,\infty)$. Assume that $X_i,\ 1\le i\le n$, and $Y_i,\ 1\le i\le n$, are mutually independent. These two are called insurance risks and financial risks in Tang and Tsitsiashvili \cite{TT2003, TT2004}, respectively. Define
\begin{eqnarray*}
&S_n=\sum_{i=1}^{n}X_i\prod_{j=1}^{i}Y_j,
\end{eqnarray*}
which represents the stochastic discounted value of aggregate net losses up to time $n$. Hence, the finite-time ruin probability up to time $n$ and the infinite-time ruin probability are defined, respectively, by
\begin{eqnarray*}
&\psi(x;n)=P\big(\max_{1\le k \le n}S_k>x\big)\ \ \ \ \ \ \ \text{and}\ \ \ \ \ \ \ \psi(x)=P\big(\sup_{k\ge1}S_k>x\big),
\end{eqnarray*}
where $x\ge0$ denotes the insurer's initial wealth.

Increasing attention has been paid to the asymptotics of the aforementioned research objectives. We refer to Tang and Tsitsiashvili \cite{TT2003, TT2004}, Goovaerts et al. \cite{GKLTV2005}, Wang and Tang \cite{WT2006}, Zhang et al. \cite{ZSW2009}, 
Shen et al. \cite{SLZ2009}, Gao and Wang \cite{GW2010}, Chen \cite{C2011}, 
Yang and Wang \cite{YW2013}, Li and Tang \cite{LT2015}, Maulik and Podder \cite{MP2016}, Yang et al. \cite{YCY2024}, Cui and Wang \cite{CW2025}, etc. It should be noted that most existing results also rely on moment conditions for the stochastic discount factor $Y_1$ as a random weight; see, for example, Theorem 5.1 of Tang and Tsitsiashvili \cite{TT2003} as follows, which is a pioneer result in this field.

Recall $W_i=\prod_{j=1}^iY_j$ with distributed $G_i$ on $(0,\infty),\ 1\le i\le n$. Clearly, $G_1=G$.

\vspace{0.2cm} \noindent\textbf{Theorem 3.A}
Consider the above discrete-time risk model with independent $X_i,Y_i,\ 1\le i\le n$, if $F\in\mathcal{L}\cap\mathcal{D}$ and $EY_1^p<\infty$ for some $p>\mathbf{J}^+_F$, then
\begin{eqnarray}\label{301}
&\psi(x;n)\sim\sum_{i=1}^nP\big(X_i\prod_{j=1}^iY_j>x\big).
\end{eqnarray}

The subsequent results have relaxed the independence assumption among $X_i,\ 1\le i\le n,$ to some dependence structures, while imposing no dependence restrictions on $Y_i,i\ge1$. See, for example, Theorem 3.1 of Shen et al. \cite{SLZ2009}. However, the above results still require that $EY_1^p<\infty$ for some $p>\mathbf{J}^+_F$.
Motivated by these developments, we consider the following question.

\vspace{0.2cm} \noindent\textbf{Question 5}. Can the moment condition on the random weight $Y_1$ in the existing results be weakened or even removed? In other words, what conclusions can be obtained without imposing any moment assumptions on $Y_1$?

At the same time, we aim to extend Theorem 3.A and related results to the case in which the net insurance losses $X_i,\ 1\le i\le n,$ are QUTAI.

According to Theorem \ref{thm102} and the fact that
\begin{eqnarray}\label{30211}
&P\big(\sum_{i=1}^{n}X_i\prod_{j=1}^{i}Y_j>x\big)\le\psi(x;n)\le P\big(\sum_{i=1}^{n}X^+_i\prod_{j=1}^{i}Y_j>x\big),
\end{eqnarray}
we can directly achieve the above two goals.

\begin{thm}\label{thm301}
(1) Consider the above discrete-time risk model with TAI $X_i,\ 1\le i\le n.$ If $F\in\cal{L}\cap\cal{D}$, and there exists some function $h(\cdot)\in\mathcal{H}_F$ and corresponding positive function $f_j(\cdot)$ on $[0,\infty),\ j=1,2$, satisfying conditions (\ref{10711})-(\ref{110000}), then relation (\ref{301}) holds.

(2) In (1), let $X_{i},\ 1\le i\le n,$ be UTAI r.v.s. Further, if condition (\ref{10800a}) is satisfied, then relation (\ref{301}) still holds.
\end{thm}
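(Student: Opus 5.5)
The plan is a sandwich argument built on the two-sided bound (\ref{30211}). Set $W_i=\prod_{j=1}^iY_j$ with distribution $G_i$, as in the model. The proof then has three steps.

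First, the lower bound. Take the $h(\cdot)\in\mathcal{H}_F$ and the functions $f_1(\cdot),f_2(\cdot)$ given in the hypotheses. These satisfy (\ref{10711})--(\ref{110000}), with $F_i=F$ and $W_i$ independent of $X_i$, $1\le i\le n$. So Theorem \ref{thm102}(1) applies directly and gives
$$\psi(x;n)\ge P\Big(\sum_{i=1}^nX_iW_i>x\Big)\sim\sum_{i=1}^n P(X_iW_i>x).$$

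Second, the upper bound. Apply Theorem \ref{thm102} again, this time to the nonnegative primary r.v.s $X_i^+$, $1\le i\le n$. Three checks are needed.

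\emph{Dependence.} By Proposition \ref{pron101}(3), the $X_i^+$ are UTAI. Since they are nonnegative, Proposition \ref{pron101}(2) shows they are TAI.

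\emph{Distribution class.} The distribution $F^+$ of $X_1^+$ has $\overline{F^+}(x)=\overline{F}(x)$ for $x\ge0$. Hence $F^+\in\mathcal{L}\cap\mathcal{D}$, $\mathcal{H}_{F^+}=\mathcal{H}_F$, $\widehat{\mathcal{H}}_{F^+}=\widehat{\mathcal{H}}_F$, and $\mathbf{J}^+_{F^+}=\mathbf{J}^+_F$. Therefore (\ref{10711}), (\ref{110001a}) and (\ref{110000}) carry over unchanged.

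\emph{Condition (\ref{110001}).} For $x>0$ we have $P(X_i^+W_i>x)=P(X_iW_i>x)$, because $W_i>0$. So the tails $\overline{H_j}(x)$ are the same for $X_j$ and $X_j^+$, and (\ref{110001}) and (\ref{11000}) are literally the same conditions.

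Theorem \ref{thm102}(1) then yields
$$\psi(x;n)\le P\Big(\sum_{i=1}^n X_i^+W_i>x\Big)\sim\sum_{i=1}^nP(X_i^+W_i>x)=\sum_{i=1}^nP(X_iW_i>x).$$
Combining this with the lower bound proves (\ref{301}) and settles part (1).

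Third, part (2). The upper bound is unchanged: UTAI of $X_i$ already gives TAI of $X_i^+$, as in the second step. Moreover, $X_i^+$ is supported on $[0,\infty)$, so no left-tail condition is needed there. For the lower bound, the $X_i$ are only UTAI, so I invoke Theorem \ref{thm102}(2). This is exactly where (\ref{10800a}) is used.

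The only delicate point, and the step I expect to need the most care, is verifying that the functions $h(\cdot),f_1(\cdot),f_2(\cdot)$ transfer verbatim from $F$ to $F^+$. In particular, Lemma \ref{lemma201} requires $P(X_i^+>x)>0$ for all $x$ in the UTAI definition. This holds for $x\ge0$ because $\overline{F}(x)>0$, and it is trivial for $x<0$. Beyond that check, the argument is a direct citation of Theorem \ref{thm102} on both sides of (\ref{30211}).
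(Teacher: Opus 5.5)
Your proposal is correct and follows the paper's own argument. The paper also derives Theorem \ref{thm301} directly from the sandwich (\ref{30211}): it applies Theorem \ref{thm102} to $X_i$ for the lower bound, and to the nonnegative, hence TAI, $X_i^+$ for the upper bound, since these share with $X_i$ the right tail, $\mathcal{H}_F$, $\mathbf{J}_F^+$ and $\overline{H_i}$. Your explicit check that conditions (\ref{10711})--(\ref{110000}) carry over verbatim to $X_i^+$ supplies details the paper leaves implicit.
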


Based on Theorems \ref{thm101}, we obtain the following result immediately.

\begin{thm}\label{thm302}
(1) Consider the above discrete-time risk model with QUTAI $X_i,\ 1\le i\le n,$ and independent $Y_i,\ 1\le i\le n$. If $F\in\cal{R}_{-\alpha}$ for some $\alpha\ge0$ and condition (\ref{1140}) is satisfied, then in Cases 1-3, it holds that
\begin{eqnarray}\label{303}
&\psi(x;n)\sim\big(\sum_{i=1}^nI^i(x)\big)\overline{F}(x).
\end{eqnarray}
In particular, for Cases 1 and 2, it holds
\begin{eqnarray}\label{302}
&\psi(x;n)\sim\big(\sum_{i=1}^nE^iY_1^\alpha\big)\overline{F}(x);
\end{eqnarray}
and for Case 3, it holds
\begin{eqnarray}\label{3056}
\psi(x;n)\sim I^n(x)\overline{F}(x).
\end{eqnarray}

(2) In (1), if $EY_1^\alpha<1$ and take $n=\infty$, then
\begin{eqnarray}\label{30200}
\psi(x)\sim (EY_1^\alpha)(1-EY_1^\alpha)^{-1}\overline{F}(x).
\end{eqnarray}
\end{thm}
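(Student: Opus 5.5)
Part (1) follows from the sandwich (\ref{30211}) together with Theorem \ref{thm101}. The lower bound is immediate: $\psi(x;n)\ge P(S_n>x)$, and Theorem \ref{thm101} gives $P(S_n>x)\sim(\sum_{i=1}^nI^i(x))\overline{F}(x)$. For the upper bound I use $\psi(x;n)\le P\big(\sum_{i=1}^nX_i^+\prod_{j=1}^iY_j>x\big)$ and apply Theorem \ref{thm101} to the nonnegative variables $X_i^+$, which requires three checks. First, $X_i^+$, $1\le i\le n$, are QUTAI: for $x_i,x_j>0$ the joint and marginal upper tails of $X_i^+$ coincide with those of $X_i$. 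Second, the distribution of $X_1^+$ has the same right tail $\overline{F}$, so it lies in $\mathcal{R}_{-\alpha}$ with the same $L(\cdot)$. Consequently condition (\ref{1131}) and Cases 1--3 are unchanged. Third, $P(X_1^+Y_1>x)=\overline{H_1}(x)$ for $x>0$, so (\ref{1140}) also holds verbatim, and one may use the same function $f(\cdot)$ from (\ref{1121}) and hence the same $I(\cdot)$. Theorem \ref{thm101} then gives the matching upper bound, which proves (\ref{303}). Specializing to Cases 1--2, where $I(x)\to EY_1^\alpha$ by monotone convergence, yields (\ref{302}). In Case 3, $I(x)\uparrow\infty$, so $\sum_{i=1}^nI^i(x)\sim I^n(x)$, which yields (\ref{3056}).

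For part (2), the condition $EY_1^\alpha<1$ places us in Case 1 or Case 2. By part (1), for each fixed $n$,
\[
\liminf\psi(x)\overline{F}^{-1}(x)\ge\lim\psi(x;n)\overline{F}^{-1}(x)=\sum_{i=1}^n E^iY_1^\alpha,
\]
and letting $n\to\infty$ gives the lower bound $(EY_1^\alpha)(1-EY_1^\alpha)^{-1}$. For the upper bound I split
\[
\psi(x)\le\psi\big(x(1-\varepsilon);n\big)+P\Big(\sum_{i=n+1}^\infty X_i^+\prod_{j=1}^iY_j>\varepsilon x\Big),
\]
and use $F\in\mathcal{R}_{-\alpha}$, under which the first term is asymptotically $(1-\varepsilon)^{-\alpha}\sum_{i=1}^nE^iY_1^\alpha\,\overline{F}(x)$. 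It then remains to show that the tail term is at most $c_n\overline{F}(x)$ with $c_n\to0$ as $n\to\infty$ for each fixed $\varepsilon$, after which I let $n\to\infty$ and then $\varepsilon\downarrow0$.

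The tail term is the main obstacle, because Theorem \ref{thm101} only treats finitely many summands and we have no moments beyond order $\alpha$. My first attempt is to pick $\delta$ with $\sum_i\delta_i\le1$, say $\delta_i\propto i^{-2}$, and bound the tail term by
\[
\sum_{i>n}P\Big(X_i^+\prod_{j\le i}Y_j>\varepsilon\delta_i x\Big).
\]
Each summand is then controlled by a Potter-type bound from Proposition \ref{pron103}(4) combined with the Breiman estimate of Lemma \ref{lem202}, giving a bound of order $(\varepsilon\delta_i)^{-\alpha-\eta}\,\overline{F}(x)\,(E\max\{Y_1^{\alpha-\eta},Y_1^{\alpha}\})^i$ for small $\eta>0$. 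The geometric factor decays because $EY_1^{\alpha-\eta}<1$ for small $\eta$ by continuity of $p\mapsto EY_1^p$ on $[0,\alpha]$, and $EY_1^\alpha<1$. The difficulty is that the Potter bound needs exponent $\alpha+\eta$ on large $Y$'s, which is unavailable in Case 2. To get around this I would truncate each $Y_j$ at $f(x)$, as in the definition of $I(\cdot)$, and use (\ref{1140}) to make the contribution of large weights negligible, noting that $\sum_iP(\prod_{j\le i}Y_j>f(x))$ only needs uniform control. I expect making this uniform in $i$, that is, summable, to be the delicate point; failing a clean argument, I would impose the usual condition $EY_1^{\alpha+\eta}<\infty$ with $E\max\{Y_1^{\alpha+\eta},Y_1^{\alpha-\eta}\}<1$, in the spirit of (\ref{115}).
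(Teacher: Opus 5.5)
Your part (1) is the paper's argument: the sandwich (\ref{30211}) together with Theorem \ref{thm101} applied to $X_i$ and to $X_i^+$. Your checks for $X_i^+$ are correct: QUTAI, the same right tail and $L$, and the same $\overline{H_1}$ on $(0,\infty)$, hence the same $f$ and $I$.

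For (2) the paper says only that the theorem follows immediately from Theorem \ref{thm101}, which covers finite $n$ only. So the interchange of $n\to\infty$ and $x\to\infty$ that you isolate is the actual content, and your proposal does not complete it. Your lower bound is fine, and so is the split $\psi(x)\le\psi((1-\varepsilon)x;n)+P(T_n>\varepsilon x)$ with $T_n=\sum_{i>n}X_i^+W_i$.

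The problem is the tail term. A union bound over $i>n$ needs control of $P(X^+W_i>\varepsilon\delta_ix)$ uniformly in $i$. Lemma \ref{lem202} is an asymptotic for each fixed $i$ and cannot be used termwise. The Potter bound on weights $w>\varepsilon\delta_i$ costs $EW_i^{\alpha+\eta}$, so your geometric factor should read $E\max\{Y_1^{\alpha-\eta},Y_1^{\alpha+\eta}\}$. In Case 1 this is $<1$ for small $\eta$ by dominated convergence, so Case 1 goes through. In Case 2 it is infinite. Truncating at $f(x)$ only moves the difficulty to $\sum_{i>n}P(W_i>f(\cdot))$: condition (\ref{1140}) concerns a single factor $Y_1$, and Markov at order $\alpha$ gives only $(EY_1^\alpha)^if(x)^{-\alpha}$, which need not be $O(\overline{F}(x))$ since $x/f(x)\to\infty$. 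Your fallback $EY_1^{\alpha+\eta}<\infty$ is exactly Case 1, so as written (2) is established only there.

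The missing idea is a self-similar bound that uses only $EY_1^\alpha<1$ and the one-step relation $\overline{H_1}(x)\sim EY_1^\alpha\,\overline{F}(x)$ from Lemma \ref{lem202}. Let $R^{(k)}_N=\sum_{i=1}^NX^+_{k+i}\prod_{j=1}^iY_{k+j}$, with $R^{(k)}_0=0$. Then $R^{(k)}_N=Y_{k+1}\big(X^+_{k+1}+R^{(k+1)}_{N-1}\big)$, with $Y_{k+1}$ independent of the bracket. So for $0<\theta<1$,
\[
P\big(R^{(k)}_N>y\big)\le\overline{H_1}(\theta y)+\int_0^\infty P\big(R^{(k+1)}_{N-1}>(1-\theta)y/w\big)\,G(dw).
\]
Set $\phi_N=\sup_{k\ge0}\sup_{y\ge y_0}P(R^{(k)}_N>y)/\overline{F}(y)$, so $\phi_0=0$. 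Split the integral at $w=(1-\theta)y/y_0$ and use $\int_0^\infty\overline{F}(t/w)\,G(dw)=\overline{H_1}(t)$. This gives $\phi_N\le K+\rho\,\phi_{N-1}$, where:
\begin{itemize}
\item $\rho=(EY_1^\alpha+\kappa)(1+\kappa)(1-\theta)^{-\alpha}<1$ for small $\kappa,\theta$ once $y_0$ is large;
\item $K=\sup_{y\ge y_0}\big(\overline{H_1}(\theta y)+\overline{G}((1-\theta)y/y_0)\big)/\overline{F}(y)<\infty$, because $\overline{G}=o(\overline{F})$ in Cases 1 and 2.
\end{itemize}
Thus $\phi_N\le K/(1-\rho)=:C$ for all $N$. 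By monotone convergence, $P(R^{(k)}_\infty>y)\le C\overline{F}(y)$ for $y\ge y_0$, uniformly in $k$. Only marginals enter, so the dependence among the $X_i$ plays no role.

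Since $T_n=W_nR^{(n)}_\infty$ with $W_n$ independent of $R^{(n)}_\infty$, you get $P(T_n>\varepsilon x)\le C\,\overline{H_n}(\varepsilon x)+\overline{G_n}(\varepsilon x/y_0)$. The fixed-$n$ facts $\overline{H_n}\sim(EY_1^\alpha)^n\overline{F}$ and $\overline{G_n}=o(\overline{F})$ come from Lemma \ref{lem202}, or from induction on $\overline{G}=o(\overline{F})$ using the same splitting. They give $\limsup P(T_n>\varepsilon x)/\overline{F}(x)\le C(EY_1^\alpha)^n\varepsilon^{-\alpha}$, with $C$ independent of $n$ and $\varepsilon$. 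This is the $c_n$ you need, and your argument then closes in both Cases 1 and 2.
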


Finally, it should be pointed out that the asymptotics of random weighted sums are crucial for other applications such as capital allocation, see Tang and Yuan \cite{TY2014} for more details.

\section{\normalsize\bf Appendix}
\setcounter{equation}{0}\setcounter{thm}{0}\setcounter{lemma}{0}\setcounter{remark}{0}\setcounter{pron}{0}\setcounter{Corol}{0}
\setcounter{Corol}{0}

\qquad The following four examples are found to illustrate the conditions and conclusions of our main results. We begin by showing that the UTAI structure properly contains the TAI structure.

\subsection{\normalsize\bf TAI structure and UTAI structure}

\begin{exam}\label{exam401}
(1) Let $Y_1$ and $Y_2$ be two independent r.v.s supported on $(-\infty,\infty)$. If $Y_1^+=Y_1\textbf{\emph{1}}_{\{Y_1\ge0\}}$ and $Y_{2}^-=-Y_2\textbf{\emph{1}}_{\{Y_2<0\}}$ follow a common distribution $V\in\mathcal{S}$, then Example 4.2.2 of Wang \cite{W2022} show that
$$X_1=Y_1^++Y_2^-\ \ \ \ \ \ \ \ \ \text{and}\ \ \ \ \ \ \ \ \ X_2=Y_2$$
are UTAI, but are not TAI.

(2) In the above example, however, $X_1$ is supported on $[0,\infty)$, and thus $X_1$ has a different distribution from $X_2$,
which is contrary to the condition of Theorems \ref{thm102} that $X_1$ and $X_2$ have the same distribution. As a result, we need to transform Example 4.2.2 of Wang \cite{W2022}. Let
$$X_1=Y_1\textbf{\emph{1}}_{\{Y_1\ge0\}}-Y_2\textbf{\emph{1}}_{\{Y_2<0\}}+Y_1\textbf{\emph{1}}_{\{Y_1<0,Y_2\ge0\}}=Y_1^++Y_2^--Y^-_1\textbf{\emph{1}}_{\{Y_2\ge0\}}
\ \ \ \ \ \ \text{and}\ \ \ \ \ \ X_2=Y_2.$$
where $Y_1$ and $Y_2$ are defined as (1). Obviously, $X_1$ and $X_2$ are both supported on $(-\infty,\infty)$,
$$X_1^+=Y_1^++Y_2^-,\ \ \ \ \ X_1^-=Y^-_1\textbf{\emph{1}}_{\{Y_2\ge0\}},\ \ \ \ \ X_2^+=Y_2^+\ \ \ \ \ \text{and}\ \ \ \ \ \ \ X^-_2=Y_2^-.$$
Assume that $X_1^+$ and $X^+_2$ have the same distribution, and that $X_1^-$ and $X^-_2$ also have the same distribution. Hence, we know that $X_1$ and $X_2$ have the same distribution.

For each pair of positive numbers $x_1$ and $x_2$, it holds that
\begin{eqnarray*}
P(X_1>x_1,X_2>x_2)\le P(Y_1^+>x_1)P(Y_2>x_2)\le P(X_1>x_1)P(X_2>x_2),
\end{eqnarray*}
which yields that $X_1$ and $X_2$ are UTAI. Especially, we take $x_1=x_2=x>0$ and use $V\in\mathcal{S}$ to show that
\begin{eqnarray*}
P(|X_2|>x,X_1>x)
\ge P(Y_2^->x)\sim2^{-1}P(Y_1^++Y_2^->x)=2^{-1}P(X_1>x),
\end{eqnarray*}
which indicates that $X_1$ and $X_2$ are not TAI.

(3) Let $F$ be a distribution on $(-\infty,\infty)$ belonging to class $\mathcal{L}$, and $U$ be a r.v. following a uniform distribution on $(0,1)$.
Further, set $X_1=F^{\leftharpoonup}(1-U)$ and $X_2=F^{\leftharpoonup}(U)$, where $F^{\leftharpoonup}(u)=\inf\{x,F(x)\ge u\}$ is the quantile function.
Clearly, $X_1$ and $X_2$ have same distribution $F$ on $(-\infty,\infty)$.

For both $x_1$ and $x_2$ large enough, by $1-F(x_1)<F(x_2)$, we have
\begin{eqnarray*}
P(X_1>x_1,\ X_2>x_2)
=P\big(U<1-F(x_1),U>F(x_2)\big)=0,
\end{eqnarray*}
which yields that $X_1$ and $X_2$ are UTAI, and that, when $x_1$ rises slowly enough to compare with $x_2$,
\begin{eqnarray*}
P(|X_1|>x_1|X_2>x_2)=P(X_1<-x_1|X_2>x_2)=\min\{F(-x_1)\overline{F}^{-1}(x_2),1\}\to1,
\end{eqnarray*}
which indicates that $X_1$ and $X_2$ are not TAI.

(4) Final example comes from Example 6.5 of Yu et al. \cite{YWC2015} with a slight correction, see Example \ref{exam402} below for details.
$\hspace{\fill}\Box$
\end{exam}

\subsection{\normalsize\bf On condition (\ref{10800a})}

\qquad Here, we show that condition (\ref{10800a}) is necessary for {Theorem \ref{thm102}(2) in a certain sense.

\begin{exam}\label{exam402}
Assume that $\{\xi,\xi_i,\ 1\le i\le n,\}$ and $\{\eta,\eta_n,\ 1\le i\le n,\}$ are two sequences of independent and identically distributed r.v.s, and the two sequences are mutually independent of each other, where $\xi$ and $\eta$ have symmetric distributions $F_1$ and $F_2$ on $(-\infty,\infty)$, respectively. Write
$$X_{2i-1}=\eta_{2i-1}+\xi_i\ \ \ \ \ \text{and}\ \ \ \ \ X_{2i}=\eta_{2i}-\xi_i,\ \ \ \ \ 1\le i\le n.$$
For the sake of brevity, we only consider the case of $i=1$ below. According to Example 1(ii) of Lehmann \cite{L1966}, for each pari $x_1$ and $x_2$, we have
$$P(\eta_{1}+\xi_1>x_1,\ -\eta_{2}+\xi_1>x_2)\ge P(\eta_{1}+\xi_1>x_1)P(-\eta_{2}+\xi_1>x_2).$$
Thus, by Lemma 1(ii) of Lehmann \cite{L1966}, we know that $X_{1}=\eta_{1}+\xi_1$ and $X_{2}=\eta_{2}-\xi_1$
are NUOD, thus UTAI, r.v.s with a common symmetric distribution $F=F_1*F_2$ on $(-\infty,\infty)$.

\emph{}Now, we prove that $X_1$ and $X_2$ are not TAI r.v.s. In fact, we know that the distribution of $X_1+X_2=\eta_1+\eta_2$ is $F_2^{*2}$. In the following, we might as well set $F_1\in{\cal{R}_{-\alpha}}$ for some $\alpha>0$ and
\begin{eqnarray}\label{4010}
\overline{F_2^{*2}}(x)=o\big(\overline{F_1}(x)\big),
\end{eqnarray}
then according to Lemmas 2.4(ii) and 2.5(i) of Pakes \cite{P2004}, we have $F=F_1*F_2\in\mathcal{R}_{-\alpha}$. Thus for each $h(\cdot)\in\mathcal{H}_F$,
\begin{eqnarray}\label{4020}
\overline{F_1}(x)=o\big(\overline{F_1}(h(x))\big).
\end{eqnarray}
Further, by (\ref{4010}), (\ref{4020}), and the symmetry of $F$, we have
\begin{eqnarray*}
\overline{F}(x)=\overline{F_1*F_2}(x)\sim\overline{F_1}(x)=o\big(\overline{F_1}(h(x))\big)=o\big(\overline{F}\big(h(x)\big)\big)=o\big(F(-h(x))\big)
\end{eqnarray*}
and
\begin{eqnarray*}
P(X_1+X_2>x)=\overline{F_2^{*2}}(x)=o\big(\overline{F_1}(x)\big)=o\big(\overline{F}(x)\big),
\end{eqnarray*}
where the former relation indicates that condition (\ref{10800a}) for each $h(\cdot)$ (thus condition (\ref{108}) for each $p>\max\{J^+_{F_i},1\le i\le n\}$ and each $f_1(\cdot)$) is not satisfied, and the latter relation indicates that conclusion (\ref{101100}) with $W_1=W_2=1$ not valid.
This fact show that $X_1$ and $X_2$ are not TAI r.v.s.

Note that there exist many r.v.s satisfying condition (\ref{10800a}). For example, in Example \ref{exam401}(1), since $X_1$ is supported on $[0,\infty)$, then condition (\ref{10800a}) is trivially satisfied. In Example \ref{exam401}(2) and (3), relation (\ref{10800a}) also holds provided that the left tail of $Y_1$ is light enough. For another illustrative example, see Example \ref{exam403} below.
$\hspace{\fill}\Box$
\end{exam}

\subsection{\normalsize\bf On functions $f_1(\cdot)$ and $f_2(\cdot)$}

\qquad In the following example, we first give the explicit expressions of two functions $f_1(\cdot)$ and $f_2(\cdot)$ that satisfy some basic conditions of Theorem \ref{thm102}.

\begin{exam}\label{exam403}
For simplicity, we might as well set $n=2$ and $F_1=F_2=F$. For each pair constants $0<\alpha\le\beta<\infty$, let $X$ be a r.v. with distribution $F$ defined as
$$\overline{F}(x)=2^{-1}\big((2-e^{-\beta x^4+1}\emph{\textbf{1}}_{(-\infty,-1)}(x)
+\textbf{\emph{1}}_{[-1,1]}(x)+x^{-\alpha}\ln^{\alpha+1}(e-1+x)\textbf{\emph{1}}_{(1,\infty)}(x)\big),\ \ \ \ \ x\in(-\infty,\infty).$$
Clearly, $F\in\mathcal{R}_{-\alpha}\subset\mathcal{L}\cap\mathcal{D}$ on $(-\infty,\infty)$, $\mathbf{J}^+_F=\mathbf{J}^-_F=\mathbf{I}_F=\alpha$ and $E|X|^\alpha=\infty$.

Further£¬we take
$$h(x)=\textbf{\emph{1}}_{[0,1]}(x)+x\ln^{-2^{-1}}(e-1+x)\textbf{\emph{1}}_{(1,\infty)}(x),$$
$$f_1(x)=\textbf{\emph{1}}_{[0,1]}(x)+\ln^{-2^{-2}}(e-1+x)\textbf{\emph{1}}_{(1,\infty)}(x)$$
and
$$f_2(x)=\textbf{\emph{1}}_{[0,1]}(x)+x\ln^{-1}(e-1+x)\textbf{\emph{1}}_{(1,\infty)}(x),\ \ \ \ x\in[0,\infty).$$

Clearly, for the above three functions, conditions (\ref{10711}) and (\ref{10712}) are both satisfied, and functions $h(\cdot)$ and $h_1(\cdot)=f_1^{-1}(\cdot)h(\cdot)$ are both belong to $\mathcal{H}_F$ by $F\in\mathcal{R}_{-\alpha}$.
For each $\alpha<p<\alpha+1$, by $\beta\ge\alpha$, we have
$$f_1^{-p}(x)F\big(-h(x)f_2^{-1}(x)\big)\overline{F}^{-1}(x)=O\big(x^{\alpha-\beta}\ln^{2^{-2}p-\alpha-1}x\big)\to0,$$
which means that condition (\ref{108}), thus condition (\ref{10800a}), is satisfied.

Finally, for each $\rho>\mathbf{J}_F^+=\alpha$ and $i=1,2$, we define distribution $K_i$ of r.v. $W^{-1}_i$ on $(0,\infty)$ such that
$$\overline{K_i}(x)=P(W^{-1}_i>x)=\textbf{\emph{1}}_{(-\infty,0)}(x)+s(x)\textbf{\emph{1}}_{[0,1)}(x)+e^{-\rho x^{4(3-i)}}\textbf{\emph{1}}_{[1,\infty)}(x),\ \ x\in(-\infty,\infty),$$
where $s(\cdot)$ is a linear function on $[0,1]$ satisfying $s(0)=1$ and $s(1)=e^{-\rho}$. Then for $i=1,2$, we define distribution $G_i$ of random weight $W_i$ on $(0,\infty)$ as follows:
$$\overline{G_i}(x)=\textbf{\emph{1}}_{(-\infty,0]}(x)+\big(1-\overline{K_i}(x^{-1})\big)\textbf{\emph{1}}_{(0,1]}(x)
+(1-e^{-\rho})x^{-\alpha}\ln^{2^{-3+i}}(e-1+x)\textbf{\emph{1}}_{(1,\infty)}(x)\big),\ \ x\in(-\infty,\infty).$$
Clearly, $EW_i^\alpha=\infty$. Further, we take $\rho\ge p>\mathbf{J}^+_F=\alpha$, then
$$f_2^p(x)G_i\big(f_1(x)\big)=f_2^p(x)\overline{K_i}\big(f_1^{-1}(x)\big)=O\big(x^{p-\rho}\ln^{-p}x\big)\to0,$$
and
$$\overline{G_i}\big(f_2(x)\big)\overline{H_i}^{-1}(x)=O\big(\overline{G_i}\big(f_2(x)\big)\overline{F}^{-1}(x)\big)=O\big(\ln^{2^{-3+i}-1}x\big)\to0,\ \ \ \ \ i=1,\ 2,$$
that is conditions (\ref{110000}) and (\ref{11000}) are satisfied.

Therefore, all conditions of Theorem \ref{thm102} with $n=2$ are satisfied. 
$\hspace{\fill}\Box$
\end{exam}

\subsection{\normalsize\bf On the regular variation of $I(\cdot)$}

\qquad In this subsection, the positive and negative examples are presented for the regular variability of $I(\cdot)$ in Theorem \ref{thm101}, respectively.

\begin{exam}\label{exam4040}
For each pair constants $0<\alpha<\beta<\infty$, let $X_1$ be a r.v. with distribution $F$ defined by
$$\overline{F}(x)=2^{-1}\big((2-|x|^{-\beta})\emph{\textbf{1}}_{(-\infty,-1)}(x)
+\textbf{\emph{1}}_{[-1,1]}(x)+x^{-\alpha}\textbf{\emph{1}}_{(1,\infty)}(x)\big),
\ \ \ \ \ \ x\in(-\infty,\infty).$$
Clearly, $F\in\mathcal{R}_{-\alpha}$ on $(-\infty,\infty)$, and $\mathbf{J}^+_F=\mathbf{J}^-_F=\mathbf{I}_F=\alpha$.

(1) We now give a positive example. Set $0<\alpha<1$. Let $Y_1$ be a r.v. with distribution $G$ on $(0,\infty)$ such that
\begin{eqnarray}\label{4040}
\overline{G}(x)=\textbf{\emph{1}}_{(-\infty,1]}(x)+x^{-\alpha}\ln^{-1}(e-1+x)\textbf{\emph{1}}_{(1,\infty)}(x),\ \ \ \ \ x\in(-\infty,\infty),
\end{eqnarray}
which yields that $G\in\mathcal{R}_{-\alpha}$, and $EX_1^\alpha=EY_1^\alpha=\infty$. 
Further, we take a function $f(\cdot)$ as
$$f(x)=\textbf{\emph{1}}_{[0,1]}(x)+x\ln^{-1}(e-1+x)\textbf{\emph{1}}_{(1,\infty)}(x),\ \ \ \ \ \ x\in[0,\infty).$$
Then $f(\cdot)\in\mathcal{R}_1^0$, condition (\ref{1121}), thus (\ref{1140}), is satisfied, and
\begin{eqnarray}\label{404000}
I(x)=EY_1^\alpha\textbf{\emph{1}}_{\{Y_1\le f(x)\}}\sim\alpha\ln\ln\big(e-1+f(x)\big)\sim\alpha\ln\ln x\uparrow\infty.
\end{eqnarray}
The latter implies that $I(\cdot)\in\mathcal{R}_0^0$.

Further, by (\ref{404000}) and (\ref{3056}), we have an accurate and intuitive result that
\begin{eqnarray*}\label{404001}
&\psi(x;n)=P\big(\max_{1\le k \le n}S_k>x\big)\sim I^n(x)\overline{F}(x)=\alpha^n(\ln\ln x)^nx^{-\alpha},
\end{eqnarray*}
which shows that the distribution of $\max_{1\le k \le n}S_k$ belongs to the class $\mathcal{R}_{-\alpha}$.\\

(2) Next, we give a negative example. Firstly, for some {$0<\alpha<1$}, let $G_{01}$ be a distribution satisfying
$$\overline{G_{01}}(x)=\textbf{\emph{1}}_{(-\infty,1]}(x)+{x^{-\alpha-1}}
\textbf{\emph{1}}_{(1,\infty)}(x),\ \ \ \ \ x\in(-\infty,\infty).$$
Then, $G_{01}\in\mathcal{R}_{\alpha+1}$ on $[1,\infty)$, and $\int_0^\infty \overline{G_{01}}(y)dy<\infty$.

Secondly, by $G_{01}$ and the method of Proposition 1.5.3(2) of Wang \cite{W2022}, we define a distribution $G_{02}$ such that
\begin{eqnarray}\label{409}
&\overline{G_{02}}(x)=\overline{G_{01}}(x)\textbf{\emph{1}}_{(-\infty,x_1)}(x)+
\sum\limits_{i=1}^{\infty}\left(\overline{G_{01}}(x_i)\textbf{\emph{1}}_{[x_i,y_i)}(x)+\overline{G_{01}}(x)\textbf{\emph{1}}_{[y_i,x_{i+1})}(x)\right),
\ \ \ x\in(-\infty,\infty),
\end{eqnarray}
where constants {$\{x_i,\ y_i:\ i\ge1\}$ satisfying $x_{i+1}-y_{i}\ge y_{i}-x_{i}>1$, and for some constant $a>1$},
$$\lim_{i\to\infty}\overline{G_{02}}(x_i)\overline{G_{02}}^{-1}(y_i)=a.$$
Clearly, $G_{02}$ on $[1,\infty)$, $\overline{G}_{02}(x)\asymp\overline{G_{01}}(x)$, and $\int_1^\infty \overline{G_{02}}(y)dy<\infty$, but $G_{02}\notin\mathcal{L}$.

Thirdly, let $f(\cdot)$ be chosen as above. Clearly, both function $f(\cdot)$ and its derivative $f'(\cdot)$ belong to class $\mathcal{L}_0$. Based on $f(\cdot)$ and $G_{02}$, we define a distribution $G_{03}$. For each $i\ge1$, there exists a unique pair $u_{i}$ and $v_{i}$ such that $f_2(u_{i})=y_{i}$ and $f_2(v_{i})=x_{i}$. We assert that there exists a constant $b>0$ such that $u_i-v_i>b$ for all $i\ge1$.
Otherwise, there is a subsequence $\{i_j:j\ge1\}$ of $\{i:i\ge1\}$ such that $u_{i_j}-v_{i_j}\to0$ as $j\to\infty$,
which contradicts to the fact $y_i-x_i=f(u_i)-f(v_i)>1$.
Thus, there exists an integer $i_0=i_0\big(G_2,f(\cdot)\big)\ge1$ such that, when $i\ge i_0$, it holds by $f(\cdot)\in\mathcal{L}_0$ and (\ref{409}) that
\begin{eqnarray*}
&\overline{G_{02}}\big(x_{i}\big)=\overline{G_{02}}\big(f(u_{i}-b)\big)=\overline{G_{02}}\big(f(u_{i}-2^{-1}b)\big)=\overline{G_{02}}\big(f(u_{i})\big)
=\overline{G_{02}}(y_{i})>\overline{G_{02}}(y_i+0)=\overline{G_{02}}\big(f(u_{i})+0\big).
\end{eqnarray*}
Then we construct the distribution $G_{03}$ as follows: in $\overline{G_{02}}(x)$, we link $\overline{G}_{02}\big(f(u_{i}-2^{-1}b)\big)$ and $\overline{G}_{02}\big(f(u_{i}+0)\big)=\overline{G_{02}}(y_{i}+0)$ with line segment for each $i\ge1$. In this way, we get a continuous distribution $G_{03}$ on $[1,\infty)$
such that $\overline{G_{03}}(x)\asymp\overline{G_{02}}(x)\asymp\overline{G_{01}}(x)$, $c=\int_1^\infty \overline{G_{03}}(y)dy<\infty$. Therefore,
$$\lim_{i\to\infty}\overline{G_{03}}\big(f(u_{i}-b)\big)\overline{G_{03}}^{-1}\big(f(u_{i})\big)=\lim\overline{G_{01}}(x_i)\overline{G_{01}}^{-1}(y_i)=a>1,$$
which implies that $G_{03}\notin\mathcal{L}$.

Fourthly, by $G_{03}$, we construct a distribution $G$ on $[1,\infty)$ {of r.v. $Y_1$} such that
\begin{eqnarray*}
&\overline{G}(x)=\textbf{\emph{1}}_{(-\infty,1]}(x)+c^{-1}\int_x^\infty \overline{G_{03}}(y)dy\textbf{\emph{1}}_{(1,\infty)}(x),\ \ \ \ \ \ x\in(-\infty,\infty).
\end{eqnarray*}
Then, the function $I(\cdot)$ corresponding to $G$ does not belong to the class $\mathcal{L}_0$. In fact, for each $$a^{-\alpha^{-1}}<t<1$$
and the above $b>0$, by the continuity of $G$, we have
\begin{eqnarray*}
&\limsup\frac{I(xt)}{I(x)}=\limsup\frac{\int_1^{f(xt)}y^\alpha G(dy)}{\int_1^{f(x)}y^\alpha G(dy)}
=\limsup\frac{f^\alpha(xt)f'(xt)\overline{G_{03}}(f(xt))}{f^\alpha(x)f'(x)\overline{G_{03}}(f(x))}
\ge \limsup\frac{t^\alpha\overline{G_{03}}(f(x-b))}{\overline{G_{03}}(f(x))}\ge t^\alpha a>1,
\end{eqnarray*}
which leads to $I(\cdot)\notin\mathcal{R}^0_0$.

Finally, for the distribution $F$, we have by Theorem \ref{thm302} that
\begin{eqnarray*}
&I(x)=c^{-1}\int_1^{f(x)}y^\alpha\overline{G_{03}}(y)dy\asymp\ln x
\ \ \ \ \ \ \ \text{and}\ \ \ \ \ \ \ \psi(x;n)\asymp\ln^nx\overline{F}(x).
\end{eqnarray*}
Hence, the distribution of $\max_{1\le k \le n}S_k$ does not belong to the class $\mathcal{L}$, thus does not belong to the class $\mathcal{R}$, but still belongs to the class $\mathcal{D}$.
$\hspace{\fill}\Box$
\end{exam}

\end{document}